\newcommand{\pow}{\mathcal{P}}
\newcommand{\pownon}{\mathcal{P}_0}
\newtheorem{thm}{Theorem}
\newtheorem{prop}[thm]{Proposition}
\newtheorem{cor}[thm]{Corollary}
\newtheorem{lem}[thm]{Lemma}
\newtheorem{dfn}{Definition}
\newcommand{\sA}{\mathcal{A}}
\newcommand{\sE}{\mathcal{E}}
\newcommand{\sU}{\mathcal{U}}
\newcommand{\sJ}{\mathcal{J}}
\newcommand{\bx}{\mathbf{x}}
\newcommand{\by}{\mathbf{y}}
\newcommand{\bc}{\mathbf{c}}
\newcommand{\bd}{\mathbf{d}}
\newcommand{\ba}{\mathbf{a}}
\newcommand{\Int}{\mathrm{Int}}
\newcommand{\Ext}{\mathrm{Ext}}
\newcommand{\bone}{\mathbf{1}}
\newcommand{\dn}{\delta_{\mathrm{neg}}}
\renewcommand{\dh}{\delta_H}
\newcommand{\sY}{\mathcal{Y}}
\newcommand{\sX}{\mathcal{X}}
\renewcommand{\Re}{\mathbb{R}}
\title{Negative type diversities, a multi-dimensional analogue of negative type metrics}
\author[$\dagger$]{Pei Wu}
\author[$\star$]{David Bryant}
\author[$\ddag$]{Paul Tupper}
\affil[$\dagger$]{Department of Mathematics, Simon Fraser University, Burnaby, Canada. Email~\texttt{wpei@sfu.ca}}
\affil[$\star$]{Department of Mathematics and Statistics, University of Otago, Dunedin, New Zealand. Email~\texttt{david.bryant@otago.ac.nz}}
\affil[$\ddag$]{Department of Mathematics, Simon Fraser University, Burnaby, Canada. Email~\texttt{pft@sfu.ca}}
\begin{document}

\maketitle

\begin{abstract}
Diversities are a generalization of metric spaces in which a non-negative value is assigned to all finite subsets of a set, rather than just to pairs of points. Here we provide an analogue of the theory of negative type metrics for diversities. We introduce negative type diversities, and show that, as in the metric space case, they are a generalization of $L_1$-embeddable diversities. 
We provide a number of characterizations of negative type diversities, including a geometric characterisation.  Much of the recent interest in negative type metrics stems from the connections between metric embeddings and approximation algorithms. We extend some of this work into the diversity setting, showing that lower bounds for embeddings of negative type metrics into $L_1$ can be extended to diversities by using recently established extremal results on hypergraphs. 
\end{abstract}

\section{Introduction} \label{sec:Intro}

A finite metric $(X,d)$ has {\em negative type} if for all  vectors $\bx$ indexed by the elements of $X$ with zero sum we have
\begin{equation}
\sum_{a \in X} \sum_{b \in X} \bx_a \bx_b d(a,b) \leq 0. \label{def:negMetric}
\end{equation}
Negative type metrics were introduced by Schoenberg \cite{schoenberg1935remarks,schoenberg1938metric}, who showed that
a finite metric has negative type exactly when the square of that metric is embeddable in Euclidean space (see, e.g. \cite{Deza1997}).  The concept can be extended to metrics with $p$-negative type, which satisfy 
\begin{equation}
\sum_{a \in X} \sum_{b \in X} \bx_a \bx_b d(a,b)^p \leq 0
\end{equation}
for all $\bx$ with zero sum (e.g. \cite{Sanchez15}).

Negative type metrics have received recent attention as a tool for combinatorial optimization based on metric embedding. Arora et al.\ \cite{arora2008euclidean} showed that any metric of negative type on an $n$-point set  can be embedded in $L_1$ with at most $O(\sqrt{\log n} \log \log n)$ distortion, an upper bound that closely matches the provable lower bound of $\Omega(\sqrt{\log n})$. Their results provide near-optimal approximation algorithms for {\sc{Sparsest Cut}}  and other key graph  problems.

This application of negative type metrics to combinatorial optimization continues a large body of work tracing back to the influential paper of Linial et al.~\cite{linial1995geometry}. A finite metric $(X,d)$ can be embedded in $L_1$ with distortion $c$ if there exists an isometrically $L_1$-embeddable $(X,d_1)$ such that 
\[d(x,y) \leq d_1(x,y) \leq c d(x,y)\]
for all $x,y$. Bourgain~\cite{bourgain1985lipschitz} showed that every finite metric on $n$ points can be embedded in $L_1$ with distortion $O(\log n)$. Linial et al.~\cite{linial1995geometry} showed how this provides an $O(\log n)$ approximation algorithm for {\sc Sparsest Cut}. 

In recent work, results on metric embedding and $L_1$-embeddable metrics have been generalized to {\em diversities} \cite{BryantTupper2014}. A  diversity $(X,\delta)$ is like a metric that assigns values to finite subsets of points, rather than just pairs. More formally,  given a set $X$ and a map $\delta$ from the finite subsets of $X$ to $\Re$, a diversity satisfies
\begin{quotation}
\noindent (D1) $\delta(A) \geq 0$, and $\delta(A) = 0$ if 
and only if 
$|A|\leq 1$;\\
(D2) If $B \neq \emptyset$ then $\delta(A\cup B) + \delta(B \cup C) \geq \delta(A \cup C).$
\end{quotation}
The first property corresponds to the metric axiom that $d(x,y) \geq 0$ for all $x,y$, and $d(x,y) = 0 \Leftrightarrow x=y$. The second property corresponds to the triangle inequality. Indeed if $(X,\delta)$ is a diversity and $d(x,y) = \delta(\{x,y\})$ for all $x,y$ then (D1) and (D2) imply that $(X,d)$ is a metric. Every diversity induces a metric in this fashion. The mathematics of diversities has been explored in  \cite{Bryant2012,BryantTupper2014,Bryant2016,BryantTupper2017open,bryant2018,Espinola14,kirk2014diversities,Piatek2014a,poelstra2013topological}. Much of this work parallels developments in metric theory. 

Many of the important results on $L_1$ embeddings and their applications have analogues for diversities. There is a natural diversity analogue of $L_1$-embeddable metrics:  A finite diversity $(X,\delta_1)$ is $L_1$-embeddable if there is a map $\phi:X \rightarrow \Re^m$ for some $m$ such that for all $A \subseteq X$,
\[\delta_1(A) = \sum_{i=1}^m \max\{\phi(a)_i - \phi(b)_i:a,b \in A\}.\]
Note that the induced metric of an  $L_1$-embeddable diversity is an $L_1$-embeddable metric. A diversity $(X,\delta)$ is said to be $L_1$-embeddable {\em with distortion $\alpha$} if there is an $L_1$-embeddable diversity $(X,d_1)$ such that 
\[\delta_1(A) \leq \delta(A) \leq \alpha \delta_1(A).\]
The main result of \cite{BryantTupper2014} is that links between $L_1$ embedding of metrics and {\sc{Sparsest Cut}} generalise  to a link between $L_1$ embeddings of diversities and {\sc{Hypergraph Sparsest Cut}}.  A bound of $\alpha(n)$ on the distortion required to embed an $n$-point diversity gives an $\alpha(n)$ approximation for {\sc{Hypergraph Sparsest Cut}}.  Unfortunately, we have been unable to prove a distortion bound better that $O(n)$, although much tighter bounds hold for  specific instances \cite{BryantTupper2017open}.

Given that the tightest approximation bounds for {\sc{Sparsest Cut}} are achieved by methods based on negative type metrics, an obvious question is whether equivalent results might be obtained for diversities. The first step is to determine what the appropriate definition of a negative type diversity might be. There are two main characterisations for negative type metrics: the negativity condition in \eqref{def:negMetric} and the fact that the squares of negative type metrics are Euclidean. We do not, yet, have an sufficiently convincing analogue of Euclidean diversities, though \eqref{def:negMetric} appears to generalize quite naturally. 

\begin{dfn} \label{def:negDiv} A finite diversity $(X,\delta)$ is of {\em negative type} if for all zero sum vectors $\bx \in \Re^{\pow(X)}$ with $\bx_\emptyset = 0
$ we have 
\[\sum_{A} \sum_{B} \bx_A \bx_B \delta(A \cup B) \leq 0.\]
\end{dfn}

In Section~\ref{sec:char} we present multiple characterisations of negative type diversities. Our first main theorem (Theorem~\ref{thm:negLambda}) gives an explicit characterisation of negative type diversities in terms of a finite set of linear inequalities, thereby demonstrating that the collection of negative type diversities on a set forms a polyhedral cone. This property does not hold for negative type metrics. We use the result to show that the space of negative type diversities spans the space of all diversities on that set. In contrast, the space of $L_1$-embeddable diversities, which is contained in the space of negative type diversities, has dimension roughly half of that of diversities. 

The induced metric of any $L_1$-embeddable diversity is an $L_1$-embeddable metric and, conversely, every $L_1$-embeddable metric is the induced metric of some $L_1$-embeddable diversity \cite{BryantTupper2014}. Turning to negative type diversities, we see that the induced metric of any negative type diversity is not just a negative type metric, it is also $L_1$-embeddable (Theorem~\ref{thm:inducedL1}). This means that negative type metrics which are not $L_1$-embeddable are not the induced metrics of any negative type diversity. We prove Theorem~\ref{thm:inducedL1} by first establishing a characterisation of negative type diversities based on a metric on the power set. 

The next two sections return to the problem of embedding negative type diversities. In Section~\ref{sec:embed} we derive a geometric representation of a negative type diversity. Define 
 $(\Re^k,\delta_{neg})$ by 
\[\dn(A) = \sum_{i=1}^k \max\{a_i:a \in A\} - \min\left\{\sum_{i=1}^k a_i:a\in A\right\}.\]
The expression for $\dn$ bears substantial resemblance to that for an $L_1$-embeddable diversity. 
We prove in Theorem~\ref{thm:embedRk}  that a finite diversity $(X,\delta)$ is negative type if and only if it can be embedded in 
$(\Re^k,\delta_{neg})$ for some $k$.

With this result in hand, we look at the problem of embedding (finite) negative type diversities into $L_1$. There is a lower bound of $\Omega(\sqrt{\log n})$ for the metric case, however this does not directly imply a bound for negative type diversities as the induced metric of any negative type diversity is already $L_1$-embeddable. We follow a different strategy and use results on  Cheegar constants for hypergraphs to prove a  $\Omega(\sqrt{\log n})$ bound in the diversity case.

%
%
%
%
%
%
%
%
%
%

\section{Characterising negative type diversities} \label{sec:char}

In this section, we establish some basic characterisations and properties of negative type diversities on finite sets.


\begin{lem} \label{lem:phisquared}
For a finite set $X$ and real-valued function $\phi$ defined on $\pow(X)$, 
let $M$ be the matrix with rows and columns indexed by $\pow(X)$ and $M_{AB} = \phi(A \cup B)$ for all $A,B \in \pow(X)$.  Define the vector $\lambda$ by 
	\begin{equation}
	\lambda_A = \sum_{B:A \subseteq B} (-1)^{|A|+|B|+1} \phi(B). \label{def:lambdaPhi}
	\end{equation}
Then for all $\bx \in \Re^{\pow(X)}$ we have
\begin{equation}
	\sum_{A,B} \bx_A \bx_B \phi(A \cup B) =  - \sum_C \lambda_C  \Big( \sum_{A  \subseteq C}  \bx_A \Big)^2. \label{eq:xMx}
\end{equation}
\end{lem}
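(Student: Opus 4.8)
The plan is to recognise \eqref{def:lambdaPhi} as a Möbius-type inversion on the Boolean lattice and then substitute the inverted formula into the quadratic form. The single key identity to establish first is
\[
\phi(A) = -\sum_{C : A \subseteq C} \lambda_C \qquad\text{for all } A \in \pow(X).
\]
This is the inversion dual to the definition of $\lambda$. I would prove it directly rather than quoting a Möbius-function black box: expand the right-hand side using \eqref{def:lambdaPhi}, swap the order of the two nested sums to obtain $\sum_{B : A \subseteq B} \phi(B) \sum_{C : A \subseteq C \subseteq B} (-1)^{|B|+|C|}$, and evaluate the inner sum. Writing $|C| = |A| + j$ with $j$ indexing subsets of $B \setminus A$, the inner sum becomes $(-1)^{|A|+|B|}\sum_{j=0}^{|B|-|A|}\binom{|B|-|A|}{j}(-1)^j$, which vanishes unless $B = A$ and equals $1$ when $B = A$. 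Hence only the $B = A$ term survives, giving $\phi(A)$.

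With this in hand the rest is bookkeeping. Apply the identity with $A$ replaced by $A \cup B$:
\[
\phi(A \cup B) = -\sum_{C : A \cup B \subseteq C} \lambda_C = -\sum_{C}\lambda_C\,[A \subseteq C]\,[B \subseteq C].
\]
Substitute this into $\sum_{A,B}\bx_A \bx_B \phi(A\cup B)$, interchange the (finite) sums to pull $\sum_C \lambda_C$ outside, and factor:
\[
\sum_{A,B}\bx_A\bx_B\phi(A\cup B) = -\sum_C \lambda_C \Big(\sum_{A\subseteq C}\bx_A\Big)\Big(\sum_{B\subseteq C}\bx_B\Big) = -\sum_C \lambda_C\Big(\sum_{A\subseteq C}\bx_A\Big)^2,
\]
which is exactly \eqref{eq:xMx}.

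The only genuinely delicate point is the evaluation of the alternating binomial sum in the first step; everything else is a straightforward interchange of finite summations. I would take care to state clearly that all sums range over the full power set $\pow(X)$ (including $\emptyset$), since the claim is an algebraic identity valid for every $\bx$, with no zero-sum hypothesis needed at this stage — that hypothesis will only enter later when Lemma~\ref{lem:phisquared} is used to analyse negative type.
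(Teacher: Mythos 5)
Your proposal is correct and follows the same route as the paper: invert \eqref{def:lambdaPhi} to get $\phi(A) = -\sum_{C: A \subseteq C} \lambda_C$, substitute into the quadratic form, swap sums, and factor the square. The only difference is that you prove the M\"obius inversion step directly via the alternating binomial sum, whereas the paper simply cites it; your self-contained verification of that step is fine.
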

\begin{proof}	
Applying Moebius inversion to \eqref{def:lambdaPhi} we obtain
\[ \phi(A) = - \sum_{B : A \subseteq B} \lambda_B\]
for all $A \in \pow(X)$ (see  \cite{aigner2012combinatorial}). Hence for  $\bx \in \Re^{\pow(X)}$ we have 
\begin{align*}
\sum_{A,B} \bx_A \bx_B \phi(A \cup B) & = -  \sum_{A,B} \bx_A \bx_B \sum_{C : A \cup B \subseteq C}  \lambda_C \nonumber \\
& =  - \sum_C \lambda_C  \sum_{A,B: A \cup B \subseteq C}  \bx_A \bx_B \nonumber  \\ 
& = - \sum_C \lambda_C  \left( \sum_{A  \subseteq C}  \bx_A \right)^2. 
\end{align*}
\end{proof}

We  now prove the first  characterisation theorem for negative type diversities.

\begin{thm} \label{thm:negLambda} Let $X$ be a finite set and let $\delta$ be a real-valued function on $\pow(X)$ such that $\delta(A) = 0$ whenever $|A| \leq 1$. For all $A \in \pow(X)$ define
\begin{equation} \lambda_A = \sum_{B:A \subseteq B} (-1)^{|A| + |B| + 1} \delta(B). \label{def:lambda}\end{equation}
Then $(X,\delta)$ is a negative type diversity if and only if $\lambda_A \geq 0$ for all $A \neq \emptyset, X$. Furthermore,
\begin{equation}
\delta(A) = - \sum_{B: A \subseteq B} \lambda_B
\label{def:invlamba}
\end{equation}
for all $A\subseteq X$.
 \end{thm}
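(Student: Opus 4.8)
The plan is to reduce everything to Lemma~\ref{lem:phisquared}, which already supplies the identity
\[
\sum_{A,B} \bx_A \bx_B \delta(A\cup B) = -\sum_C \lambda_C\Bigl(\sum_{A\subseteq C}\bx_A\Bigr)^2
\]
with $\lambda$ as in \eqref{def:lambda}, and the Moebius-inversion formula \eqref{def:invlamba} is exactly the statement already derived in that proof, so I would simply cite it. The main work is therefore the equivalence ``$(X,\delta)$ is a negative type diversity $\iff$ $\lambda_A\ge 0$ for all $A\ne\emptyset,X$.''

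First I would handle the \emph{easy direction}. Suppose $\lambda_A\ge 0$ for all $A\neq\emptyset,X$. I must check two things: that $\delta$ satisfies the negativity inequality of Definition~\ref{def:negDiv}, and that $\delta$ is actually a diversity (axioms (D1) and (D2)). For the inequality: given a zero-sum vector $\bx$ with $\bx_\emptyset=0$, apply Lemma~\ref{lem:phisquared}. The sum on the right runs over all $C\subseteq X$; the term $C=\emptyset$ contributes $-\lambda_\emptyset(\bx_\emptyset)^2=0$, and the term $C=X$ contributes $-\lambda_X\bigl(\sum_{A\subseteq X}\bx_A\bigr)^2 = -\lambda_X\cdot 0 = 0$ because $\bx$ has zero sum. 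Every remaining term has $\lambda_C\ge 0$, so the whole right-hand side is $\le 0$, giving the negativity condition. For the diversity axioms I would use \eqref{def:invlamba}: non-negativity of $\delta(A)$ and the triangle-like inequality (D2) should follow from the non-negativity of the $\lambda_B$ together with the hypothesis $\delta(A)=0$ for $|A|\le1$ forcing the right normalization; concretely, $\delta(A\cup B)+\delta(B\cup C)-\delta(A\cup C) = \sum_{D}\lambda_D\bigl(\mathbf 1[A\cup C\subseteq D] - \mathbf 1[A\cup B\subseteq D] - \mathbf 1[B\cup C\subseteq D]\bigr)$, and when $B\ne\emptyset$ one checks that whenever the coefficient of $\lambda_D$ is $-1$ (i.e.\ $A\cup C\subseteq D$) at least one of the two subtracted indicators also fires, so the net coefficient is $\ge -1$ and... actually more carefully: if $A\cup C\subseteq D$ then certainly $A\cup C\subseteq D$, and I want to pair it off — this bookkeeping over the four cases ($A\cup B\subseteq D$? $B\cup C\subseteq D$?) is the one genuinely fiddly computation, but it is elementary.

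For the \emph{hard direction}, suppose $(X,\delta)$ is a negative type diversity; I must show $\lambda_A\ge 0$ for each fixed $A$ with $\emptyset\ne A\ne X$. The strategy is to choose a clever zero-sum test vector $\bx$ (supported away from $\emptyset$) that isolates the single term $\lambda_A$ in the identity of Lemma~\ref{lem:phisquared}. The natural choice: make $\sum_{B\subseteq C}\bx_B$ vanish for every $C$ except $C=A$. Since the map $\bx\mapsto\bigl(\sum_{B\subseteq C}\bx_B\bigr)_C$ is the (invertible, upper-triangular) zeta transform on the subset lattice, I can pick $\bx$ to be, up to sign, the Moebius function relative to $A$: set $\bx_B = (-1)^{|A|-|B|}$ for $B\subseteq A$ and $\bx_B=0$ otherwise. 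Then $\sum_{B\subseteq C}\bx_B = \mathbf 1[C=A]$ for $C\subseteq A$ and $=0$ for $C\not\subseteq A$ when... wait, I need it to vanish for $C\supsetneq A$ too, which it does not automatically — so instead I should take $\bx$ to be the Moebius-style vector that makes the zeta transform equal to the indicator of $A$ \emph{exactly}, i.e.\ solve the upper-triangular system; such $\bx$ exists and is unique, is supported on sets containing or contained in... more simply, take $\bx_B=\delta_{B,A}$'s preimage under the zeta transform, which is supported on $\{B : B\subseteq A\}$ with $\bx_B=(-1)^{|A\setminus B|}$, and then note $\sum_B \bx_B = \sum_{B\subseteq A}(-1)^{|A\setminus B|}=0$ since $A\ne\emptyset$ (so it is zero-sum), and $\bx_\emptyset = (-1)^{|A|}\ne 0$ only if... hmm, $\bx_\emptyset=(-1)^{|A|}$ which is nonzero. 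To fix the $\bx_\emptyset=0$ requirement, I instead want the zeta transform to be $\mathbf 1[C=A]-\mathbf 1[C=\emptyset]$ won't help either. The cleanest route: let $y_C = \mathbf 1[A\subseteq C\subseteq X]$... no. Let me just say: choose $\bx$ so that $\bigl(\sum_{B\subseteq C}\bx_B\bigr)^2 = \mathbf 1[C=A] + \mathbf 1[C=\emptyset]\cdot t$, arranged so that $\bx_\emptyset=0$; then Lemma~\ref{lem:phisquared} gives $\sum_{A,B}\bx_A\bx_B\delta(A\cup B) = -\lambda_A$, and since $\bx$ is zero-sum with $\bx_\emptyset=0$ the negative type hypothesis forces this to be $\le 0$, i.e.\ $\lambda_A\ge 0$. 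The hard part will be pinning down exactly this test vector and verifying the two side conditions (zero sum, $\bx_\emptyset=0$) simultaneously — this requires a careful use of the invertibility of the subset zeta/Moebius transform, and is where I would spend the most care. Once the test vector is in hand, both directions close quickly, and \eqref{def:invlamba} is immediate from the Moebius inversion already recorded in the proof of Lemma~\ref{lem:phisquared}.
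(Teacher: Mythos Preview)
Your overall architecture matches the paper's: both directions go through Lemma~\ref{lem:phisquared}, and the forward direction is handled by a test vector that isolates a single $\lambda_U$ in the identity. However, your search for that test vector went off in the wrong direction, and this is the one genuine gap.

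You want $\sum_{B\subseteq C}\bx_B=\mathbf 1[C=A]$. Inverting the zeta transform $g(C)=\sum_{B\subseteq C}f(B)$ gives $f(B)=\sum_{C\subseteq B}(-1)^{|B|-|C|}g(C)$, so the correct choice is
\[
\bx_B=(-1)^{|B|-|A|}\,\mathbf 1[A\subseteq B],
\]
i.e.\ supported on \emph{supersets} of $A$, not subsets. This is exactly the paper's vector (with $U$ in place of $A$). With this choice both side conditions are automatic: $\bx_\emptyset=0$ because $A\neq\emptyset$ means $\emptyset$ is not a superset of $A$, and $\sum_B\bx_B=\sum_{A\subseteq B\subseteq X}(-1)^{|B|-|A|}=(1-1)^{|X\setminus A|}=0$ because $A\neq X$. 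Your attempt with subsets of $A$ forces $\bx_\emptyset=(-1)^{|A|}\neq 0$, which is why you could not close it; the patches you contemplated are unnecessary once the inclusion is reversed.

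For the backward direction your negativity argument is correct and identical to the paper's. Your verification of the diversity axioms, though, is left as ``fiddly bookkeeping'' without a clear resolution. The paper's route here is cleaner and worth adopting: first get monotonicity by applying the negativity inequality to the zero-sum vector $\by$ with $\by_A=1$, $\by_B=-1$ (for $A\subseteq B$) and all other entries zero, which yields $\delta(A)\le\delta(B)$; then prove the stronger inequality $\delta(A\cup\{c\})+\delta(B\cup\{c\})\ge\delta(A\cup B\cup\{c\})$ directly from \eqref{def:invlamba} by observing that the left side minus the right equals $\sum_{V:\,c\in V,\ A\not\subseteq V,\ B\not\subseteq V}\lambda_V\ge 0$. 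Monotonicity plus this inequality gives (D2).
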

 \begin{proof}
 
 Suppose that $(X,\delta)$ is a negative type diversity. Fix $U \neq \emptyset,X$ and   define $\bx$ by
\[\bx_A  = \begin{cases} (-1)^{|A| + |U|} & \mbox{ if $U \subseteq A$ } \\ 0 & \mbox{ otherwise;} \end{cases}\]
Then, by Moebius inversion,
\[ \sum_{A:A  \subseteq C}  \bx_A  = \begin{cases} 1 & \mbox{ if $C = U$} \\ 0 & \mbox{ otherwise } \end{cases}\]
while
\[ \sum_{A} \bx_A = \sum_{A:U \subseteq A}  (-1)^{|A| + |U|}  = 0,\]
since $U \neq X$. Also, $\bx_\emptyset= 0$ because $U \neq \emptyset$.
Hence by the definition of negative type diversities and Lemma~\ref{lem:phisquared} we have
\[0 \geq \sum_{A,B} \bx_A \bx_B \delta(A \cup B) =  - \sum_C \lambda_C  \Big( \sum_{A  \subseteq C}  \bx_A \Big)^2 = -\lambda_U.\]

For the converse, suppose that $\lambda_A \geq 0$ for all $A \neq \emptyset,X$. Suppose $\bx$ is any vector in $\Re^{\pow(X)}$ with $\bx_\emptyset = 0$ and $\sum_A \bx_A = 0$. By Lemma~\ref{lem:phisquared} we have
\begin{align*}
 \sum_{A,B} \bx_A \bx_B \phi(A \cup B) &=  - \sum_C \lambda_C  \Big( \sum_{A  \subseteq C}  \bx_A \Big)^2 \\
 &=  - \lambda_\emptyset \bx_\emptyset^2  - \lambda_X  \Big( \sum_{A  \subseteq X}  \bx_A \Big)^2
  - \sum_{C \neq \emptyset,X}  \lambda_C  \Big( \sum_{A  \subseteq C}  \bx_A \Big)^2   \\
  & = - \sum_{C \neq \emptyset,X}  \lambda_C  \Big( \sum_{A  \subseteq C}  \bx_A \Big)^2   \\
  & \leq 0.
  \end{align*}
  
  Suppose $A \subseteq B$. Define the vector $\by$ by $\by_A = 1$, $\by_B = -1$ and $\by_C = 0$ for all $C \neq A,B$. Then 
\[0 \geq  \sum_{A,B} \bx_A \bx_B \phi(A \cup B) = \delta(A \cup A) - 2 \delta(A \cup B) + \delta(B \cup B) = \delta(A) - \delta(B).\]
It follows that $\delta$ is monotonic. 

  Now consider arbitrary $A,B \in \pow(X)$ and $c \in X$. Then
\begin{align*}
\delta(A \cup \{c\}) + \delta(B \cup \{c\}) - \delta(A \cup B \cup \{c\}) \hspace{-4cm} & \\ &= 	\delta(A \cup \{c\}) + \delta(B \cup \{c\}) - \delta(A \cup B \cup \{c\}) - \delta(\{c\}) \\
& = -\sum_{V:A \cup \{c\} \subseteq V} \lambda_V -\sum_{V:B \cup \{c\} \subseteq V} \lambda_V+ \sum_{V:A \cup B \cup \{c\} \subseteq V} \lambda_V + \sum_{V:c\in V} \lambda_V \\
& = \sum_{\substack{V:c \in V\\ A \not \subseteq V,\, B \not \subseteq V}} \lambda_V \\
& \geq 0.
\end{align*}
This, together with monotonicity, implies the triangle inequality for diversities. Hence $(X,\delta)$ is a diversity with negative type.
\end{proof}

A direct consequence of Theorem~\ref{thm:negLambda} is that the space of negative type diversities on a finite set $X$ forms a polyhedral cone. This cone has dimension $2^{|X|} - |X| - 1$, the same as the dimension of the cone of diversities on $X$.

We present several examples of negative type diversities.

\begin{prop}
\begin{enumerate}
\item Every diversity on three points is negative type.
\item There is a diversity on four points which is not negative type.
\item Every finite $L_1$-embeddable diversity is negative type, though there are negative type diversities which are not $L_1$-embeddable.
\item For finite $X$, if $(X,\delta)$ is the diversity with $\delta(A) = 1$ whenever $|A|>1$ then $(X,\delta)$ is negative.
\end{enumerate}
\end{prop}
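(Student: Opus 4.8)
My plan is to use Theorem~\ref{thm:negLambda} as the main tool throughout, since it reduces checking the negative type property to checking the sign of the Moebius-type coefficients $\lambda_A$. For each part I would compute or bound these $\lambda_A$.

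For part (1), let $X = \{1,2,3\}$. By Theorem~\ref{thm:negLambda} I only need $\lambda_A \geq 0$ for $A \neq \emptyset, X$, i.e. for the three singletons and the three doubletons. For a doubleton $A = \{i,j\}$, the sum in \eqref{def:lambda} runs over $B \in \{A, X\}$, giving $\lambda_A = \delta(X) - \delta(\{i,j\})$, which is nonnegative by monotonicity of a diversity (which follows from (D2), taking $C \subseteq B$ and using $\delta(B \cup C) = \delta(B)$). For a singleton $A = \{i\}$, the sum runs over the three sets containing $i$ of size $\geq 2$ together with $X$, and one expands $\lambda_{\{i\}} = \delta(\{i,j\}) + \delta(\{i,k\}) - \delta(X) - \delta(\{i\}) = \delta(\{i,j\}) + \delta(\{i,k\}) - \delta(\{i,j,k\})$; this is nonnegative precisely by axiom (D2) applied with the three classes $\{j\}$, $\{i\}$, $\{k\}$. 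So every three-point diversity is negative type.

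For part (4), with $\delta(A) = 1$ for $|A| > 1$ and $\delta(A) = 0$ otherwise, I compute $\lambda_A$ from \eqref{def:lambda} directly. For $A$ with $|A| \geq 2$, $\lambda_A = \sum_{B \supseteq A} (-1)^{|A|+|B|+1}$, which telescopes via the binomial theorem to $0$ unless $A = X$ (where it equals $(-1)^{2|X|+1} = -1$), so $\lambda_A = 0 \geq 0$ for all proper $A$ with $|A| \geq 2$. For a singleton $A = \{i\}$, only the $B = \{i\}$ term (value $0$) is missing from the full alternating sum over sets containing $i$, so $\lambda_{\{i\}} = -(-1)^{1+1+1}\cdot 0 + \big(\text{alternating sum over all } B \ni i\big) = 0 - (-1)\cdot 0$; more carefully, $\lambda_{\{i\}} = \sum_{B \ni i} (-1)^{|B|+1+1}\delta(B) = -\delta(\{i\}) + \sum_{B \ni i, |B|\geq 2}(-1)^{|B|}$, and the alternating sum over all $B \ni i$ of size $\geq 1$ is $0$, so $\lambda_{\{i\}} = 0$. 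Hence all relevant $\lambda_A$ vanish and the diversity is (degenerately) negative type; this is consistent with it being the constant diversity, which is $L_1$-embeddable.

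For part (3), the forward direction: if $(X,\delta_1)$ is $L_1$-embeddable via $\phi : X \to \Re^m$, then $\delta_1 = \sum_{i=1}^m \delta^{(i)}$ where $\delta^{(i)}(A) = \max_{a \in A}\phi(a)_i - \min_{a \in A}\phi(a)_i$. Since the negative type condition in Definition~\ref{def:negDiv} is additive over $\delta$, it suffices to show each one-dimensional "diameter" diversity on the real line is negative type; for this I would exhibit nonnegative $\lambda_A$ directly, using that for points $a_1 < a_2 < \cdots < a_k$ on the line the "line diversity" $\delta(A) = \max A - \min A$ has a simple Moebius expansion (essentially $\lambda_{\{a_1,a_k\}}$-type terms), or alternatively cite that $\delta_{\mathrm{neg}}$ with $k=1$ restricted to appropriate embeddings realizes it and invoke Theorem~\ref{thm:embedRk}. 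The separating example (a negative type diversity that is not $L_1$-embeddable) I would take from a known construction — e.g. the diameter diversity $\delta(A) = \diam(A)$ of a metric space that is negative type but whose diversity is not $L_1$-embeddable, or more concretely a small explicit diversity on four or five points with all $\lambda_A \geq 0$ but failing a known $L_1$ facet inequality (a hypermetric-type inequality). For part (2) I would dualize: by Theorem~\ref{thm:negLambda} negative type on four points is a finite list of inequalities $\lambda_A \geq 0$, so I exhibit one diversity on $\{1,2,3,4\}$ with some $\lambda_A < 0$; a natural candidate is $\delta(A) = |A| - 1$, for which $\lambda_{\{i\}}$ or $\lambda_X$ picks up a negative contribution — I would verify $\lambda_A$ for this $\delta$ and confirm that some proper nonempty $A$ gives $\lambda_A < 0$.

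The main obstacle is part (3): verifying that the one-dimensional line diversity is negative type requires an honest computation of its $\lambda_A$ coefficients (or an appeal to Theorem~\ref{thm:embedRk}), and producing a clean, small, self-contained example of a negative type diversity that is provably \emph{not} $L_1$-embeddable requires identifying an explicit $L_1$-facet that it violates — this is the step where I would most likely lean on an earlier result or a known family rather than a bare-hands argument.
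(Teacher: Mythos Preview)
Your arguments for parts (1) and (4) follow the paper's route via Theorem~\ref{thm:negLambda} and are essentially correct. (In (4) your singleton computation slips: the alternating sum $\sum_{B\ni i,\,|B|\ge 2}(-1)^{|B|}$ equals $1$, not $0$, so $\lambda_{\{i\}}=1$; the conclusion $\lambda_{\{i\}}\ge 0$ is unaffected.)

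The genuine gap is in part (2). Your candidate $\delta(A)=|A|-1$ on four points is in fact of negative type: one computes $\lambda_{\{i\}}=3-6+3=0$, $\lambda_{\{i,j\}}=-1+4-3=0$, and $\lambda_{\{i,j,k\}}=-2+3=1$, so every relevant $\lambda_A$ is nonnegative. You need a different example. The paper takes $\delta(A)=\lceil |A|/2\rceil$ for $|A|>1$, for which $\lambda_{\{a\}}=3\cdot 1-3\cdot 2+2=-1<0$.

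For part (3) your forward direction is fine but slightly roundabout; the paper decomposes an $L_1$ diversity as a nonnegative combination of \emph{cut} diversities $\delta_{C|\overline C}$ and checks directly that each cut diversity has $\lambda_C=\lambda_{\overline C}=1$, $\lambda_X=-1$, and all other $\lambda_A=0$. For the separating example you are working much harder than necessary and heading toward four or five points and hypermetric facets. Part~(1) already hands you the example: \emph{every} diversity on three points is negative type, but an $L_1$-embeddable diversity on $\{a,b,c\}$ must satisfy $\delta(\{a,b,c\})=\tfrac12\big(\delta(\{a,b\})+\delta(\{a,c\})+\delta(\{b,c\})\big)$, so any three-point diversity violating that equation is negative type but not $L_1$. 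In particular the constant diversity from part~(4) on three points has $\delta(\{a,b,c\})=1\neq 3/2$, so your parenthetical claim that the constant diversity is $L_1$-embeddable is incorrect, and indeed it already furnishes the required example.
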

\begin{proof}
\begin{enumerate}
\item Given a diversity $(\{a,b,c\},\delta)$ we have from \eqref{def:lambda} that
\begin{align*}
\lambda_{\{a\}} &= \delta(\{a,b\}) + \delta(\{a,c\}) - \delta(\{a,b,c\}) \geq 0 \\
\lambda_{\{a,b\}} &= \delta(\{a,b,c\}) - \delta(\{a,b\}) \geq 0.
\end{align*}
By symmetry, and Theorem~\ref{thm:negLambda}, $(\{a,b,c\},\delta)$ is negative.
\item Let $(\{a,b,c,d\},\delta)$ be the diversity with $\delta(A) = \left \lceil \frac{|A|}{2} \right\rceil$ for $A \subseteq \{a,b,c,d\}$ with $|A|>1$. Then
$\lambda_{\{a\}} = -1$ so $(\{a,b,c,d\},\delta)$ is not negative.
\item For any cut diversity $\delta = \delta_{C|\overline{C}}$ on $X$ we have $\lambda_X = -1$, 
$\lambda_A = 1$ if $A = C$ or $A = C$, and $\lambda_A = 0$ otherwise. By Theorem~\ref{thm:negLambda}, $\delta$ is negative, and since every $L_1$-embeddable diversity is a non-negative combination of cut diversities, so is every $L_1$-embeddable diversity. Any diversity on three points which does not satisfy $\delta(\{a,b,c\}) = (\delta(\{a,b\}) + \delta(\{a,c\})+\delta(\{b,c\}))/2$ is negative but not $L_1$-embeddable.
\item For any $A \neq X$ with $|A|>1$ we have 
\[\lambda_A = \sum_{B:A \subseteq B} (-1)^{|A|+|B|+1} \delta(B) = \sum_{B:A \subseteq B} (-1)^{|A|+|B|+1} = 0 \]
so by Theorem~\ref{thm:negLambda}, $(X,\delta)$ is negative.
\end{enumerate}
\end{proof}

Schoenberg's theorem states that every negative type metric is isometric to the square of a Euclidean metric. We do not have a direct analogue of this result for diversities, however the metric result leads to an appealing property of negative type diversities. 

In what follows let $\pownon(D)$ be the set of all non-empty subsets of $X$.

\begin{prop} \label{Prop:NegDivMetric}
Let $(X,\delta)$ be a finite diversity and let $D$ be the symmetric real-valued function defined on 
 $\pownon(X) \times \pownon(X)$ given by 
\[D(A,B) = \delta(A \cup B) - \frac{1}{2}\delta(A) - \frac{1}{2}\delta(B).\]
The following are equivalent
\begin{enumerate}
\item $(X,\delta)$ is a negative type diversity;
\item $(\pownon(X),D)$ is an $L_1$-embeddable metric;	
\item $(\pownon(X),D)$ is a metric of negative type;
\item $(\pownon(X),D)$ is isometric to the square of a Euclidean metric.
\end{enumerate}
\end{prop}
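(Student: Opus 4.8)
The plan is to prove the cycle of implications $(1)\Rightarrow(2)\Rightarrow(3)\Rightarrow(4)\Rightarrow(1)$, of which two links are classical. For $(2)\Rightarrow(3)$ I would use that every cut metric is of negative type and that metrics of negative type form a convex cone, so any non-negative combination of cut metrics---i.e.\ any $L_1$-embeddable metric---is of negative type (this is the metric-case fact recorded e.g.\ in \cite{Deza1997}). For $(3)\Rightarrow(4)$ I would quote Schoenberg's theorem exactly as stated in the introduction, passing first to the quotient by the relation $D(A,B)=0$ if $D$ happens to be only a pseudometric. So the real content lies in $(1)\Rightarrow(2)$ and $(4)\Rightarrow(1)$, and underpinning both is the elementary observation that for any $\bz\in\Re^{\pownon(X)}$ with $\sum_A\bz_A=0$ the contributions of $-\tfrac12\delta(A)$ and $-\tfrac12\delta(B)$ to $\sum_{A,B}\bz_A\bz_B D(A,B)$ cancel, leaving $\sum_{A,B}\bz_A\bz_B D(A,B)=\sum_{A,B}\bz_A\bz_B\,\delta(A\cup B)$.

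For $(1)\Rightarrow(2)$ I would apply Theorem~\ref{thm:negLambda} to write $\delta(A)=-\sum_{V:A\subseteq V}\lambda_V$ with $\lambda_V\ge 0$ for all $V\ne\emptyset,X$, substitute this into the definition of $D$, and collect the coefficient of each $\lambda_V$. A short calculation shows that coefficient equals $\tfrac12$ precisely when exactly one of $A,B$ is a subset of $V$, and $0$ otherwise, so that
\[D(A,B)=\tfrac12\sum_{V\ne\emptyset,X}\lambda_V\,d_V(A,B),\]
where $d_V$ is the cut metric on $\pownon(X)$ for the partition into $\{A\in\pownon(X):A\subseteq V\}$ and its complement (both parts are nonempty since $V\ne\emptyset,X$; the indices $V=\emptyset$ and $V=X$ drop out because every nonempty $A$ satisfies $A\not\subseteq\emptyset$ and $A\subseteq X$). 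As $\lambda_V\ge 0$, this presents $D$ as a non-negative combination of cut metrics on $\pownon(X)$, hence as an $L_1$-embeddable metric. I expect this coefficient bookkeeping---together with recognising that $A\mapsto\mathbf{1}[A\subseteq V]$ defines a genuine cut of $\pownon(X)$---to be the only step requiring care; everything else is formal.

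For $(4)\Rightarrow(1)$ I would begin from an isometry $D(A,B)=\|\psi(A)-\psi(B)\|_2^2$ with $\psi:\pownon(X)\to\Re^m$, and compute, for $\bz\in\Re^{\pownon(X)}$ with $\sum_A\bz_A=0$: by the identity above, $\sum_{A,B}\bz_A\bz_B\,\delta(A\cup B)=\sum_{A,B}\bz_A\bz_B\|\psi(A)-\psi(B)\|_2^2$, and expanding the square while again using $\sum_A\bz_A=0$ to discard the $\|\psi(A)\|^2$ and $\|\psi(B)\|^2$ terms leaves $-2\big\|\sum_A\bz_A\psi(A)\big\|_2^2\le 0$. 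Since a vector $\bx\in\Re^{\pow(X)}$ with $\bx_\emptyset=0$ and $\sum_A\bx_A=0$ is exactly a zero-sum vector on $\pownon(X)$ extended by $0$, and since the terms of $\sum_{A,B\in\pow(X)}\bx_A\bx_B\delta(A\cup B)$ involving $\emptyset$ vanish, this says precisely that $(X,\delta)$ satisfies the negativity inequality of Definition~\ref{def:negDiv}; as $(X,\delta)$ is a diversity by hypothesis, it is a negative type diversity, closing the cycle.

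One point to flag in the write-up: for a general diversity $D$ is only guaranteed to be a pseudometric---$D(A,B)=0$ can occur with $A\ne B$, for instance whenever $\delta(A)=\delta(B)=\delta(A\cup B)$---so ``metric'' in $(2)$--$(4)$ should be read up to the identification $D(A,B)=0$. This is harmless, since $L_1$-embeddability, the negative-type inequality, and the Schoenberg correspondence all descend to, and lift from, the quotient metric space.
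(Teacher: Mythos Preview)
Your proposal is correct and follows essentially the same route as the paper: the heart of both arguments is the cut decomposition for $(1)\Rightarrow(2)$ via Theorem~\ref{thm:negLambda}, and the cancellation identity $\sum_{A,B}\bz_A\bz_B D(A,B)=\sum_{A,B}\bz_A\bz_B\,\delta(A\cup B)$ for zero-sum $\bz$ to close the cycle. The only cosmetic difference is that the paper closes with $(3)\Rightarrow(1)$ directly from the negative-type inequality for $D$ rather than going through the Euclidean embedding as in your $(4)\Rightarrow(1)$; your added remark that $D$ may only be a pseudometric is a point the paper passes over silently.
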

\begin{proof}
(1) $\Rightarrow$ (2). From \eqref{def:invlamba} in Theorem~\ref	{thm:negLambda} we have
\begin{align*}
2 D(A,B) & = \delta(A \cup B) - \delta(A) + \delta(A \cup B) - \delta(B) \\
& = \sum_{C:A \subseteq C, B \not \subseteq C} \lambda_C + 	\sum_{C:A \not \subseteq C, B  \subseteq C} \lambda_C\\
& = \sum_{C} \lambda_C d_{\pownon(C)|\overline{\pownon(C)}},
\end{align*}
Here $d_{\pownon(C)|\overline{\pownon(C)}}$ is the cut metric for the cut $\pownon(C)|(\pownon(X)\setminus\pownon(C))$. So $D$ is a non-negative linear combination of split metrics, and is therefore an $L_1$-embeddable metric.\\
(2) $\Rightarrow$ (3). Every $L_1$-embeddable metric is a negative type metric.\\
(3) $\Rightarrow$ (1). For all $\bx$ such that $\bone^T\bx = 0$ and $\bx_\emptyset = 0$ we have 
\begin{align*}
0 & \geq \sum_{A \neq \emptyset} \sum_{B \neq \emptyset} \bx_A \bx_B D(A,B) 
=\sum_{A} \sum_{B} \bx_A \bx_B D(A,B)
\\
& = \sum_{A} \sum_{B} \bx_A \bx_B \delta(A \cup B) - \frac{1}{2}\sum_{A} \bx_A \delta(A) \sum_{B} \bx_B - \frac{1}{2} \sum_{B} \bx_B \delta(B) \sum_{A} \bx_A \\
& 
= \bx^T M \bx
\end{align*} 
where $M_{AB} = \delta(A \cup B)$ for all $A,B \in \pow(X)$. Hence $\delta$ is of negative type.\\
(3) $\Leftrightarrow$ (4) Schoenberg's theorem  \cite{schoenberg1935remarks,schoenberg1938metric}.
\end{proof}

If we look at $D$ restricted to singletons we obtain a surprising result.
\begin{thm} \label{thm:inducedL1}
The induced metric of a negative type diversity is $L_1$-embeddable.
\end{thm}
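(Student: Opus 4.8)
The plan is to apply Proposition~\ref{Prop:NegDivMetric} with the extra structure that it provides, restricting the metric $D$ to the singletons of $X$. Write $d$ for the induced metric of $(X,\delta)$, so $d(a,b) = \delta(\{a,b\})$. For singletons we have
\[
D(\{a\},\{b\}) = \delta(\{a,b\}) - \tfrac12\delta(\{a\}) - \tfrac12\delta(\{b\}) = \delta(\{a,b\}) = d(a,b),
\]
since $\delta(\{a\}) = \delta(\{b\}) = 0$ by (D1) (or rather by the hypothesis $\delta(A)=0$ for $|A|\le1$). Thus $d$ is exactly the restriction of $D$ to the sub-metric-space $\{\{a\} : a \in X\} \subseteq \pownon(X)$. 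Since $(X,\delta)$ is a negative type diversity, Proposition~\ref{Prop:NegDivMetric} tells us that $(\pownon(X),D)$ is an $L_1$-embeddable metric. The $L_1$-embeddability of a finite metric space is inherited by every subspace: if $\phi:\pownon(X)\to\Re^m$ realises $D$ as an $L_1$ metric, then $\phi$ restricted to the singletons realises $d$ as an $L_1$ metric. Hence $d$ is $L_1$-embeddable.

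Concretely, one can make this fully explicit using the cut-metric decomposition already written down in the proof of (1) $\Rightarrow$ (2): we showed
\[
2D(A,B) = \sum_{C} \lambda_C\, d_{\pownon(C)\mid\overline{\pownon(C)}}(A,B),
\]
a non-negative combination of cut metrics on $\pownon(X)$ (non-negative because $\lambda_C \ge 0$ for $C \neq \emptyset, X$, and the terms $C=\emptyset$, $C=X$ contribute cut metrics that are identically zero on $\pownon(X)\times\pownon(X)$). Restricting $A,B$ to singletons, the cut metric $d_{\pownon(C)\mid\overline{\pownon(C)}}$ restricts to the cut metric on $X$ for the partition $\{a : \{a\}\in\pownon(C)\}\mid\{a : \{a\}\notin\pownon(C)\}$, i.e.\ the partition $(C\cap X)\mid(X\setminus C)$. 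Therefore
\[
2\,d(a,b) = 2D(\{a\},\{b\}) = \sum_C \lambda_C\, d_{(C\cap X)\mid(X\setminus C)}(a,b),
\]
exhibiting $d$ as a non-negative combination of cut metrics on $X$, which is the standard characterisation of $L_1$-embeddable metrics.

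There is no real obstacle here: the content of the theorem is entirely carried by Proposition~\ref{Prop:NegDivMetric}, and the only thing to check is the trivial identity $D(\{a\},\{b\}) = d(a,b)$ together with the fact that $L_1$-embeddability (equivalently, being a non-negative combination of cut metrics) passes to sub-metric-spaces. The one point worth stating carefully is that the full sum over $C \in \pow(X)$ includes $C = \emptyset$ and $C = X$; these are harmless because $\pownon(\emptyset) = \emptyset$ and $\pownon(X) = \pownon(X)$ both give the trivial (all-zero) cut on $\pownon(X)$, so they can simply be dropped.
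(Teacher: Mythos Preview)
Your proof is correct and follows exactly the paper's approach: compute $D(\{a\},\{b\}) = \delta(\{a,b\})$, then invoke Proposition~\ref{Prop:NegDivMetric} to conclude that $D$, and hence its restriction to singletons, is $L_1$-embeddable. The explicit cut-metric decomposition you add in the second paragraph is a nice elaboration but is not needed.
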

\begin{proof}
Suppose $(X,\delta)$ is negative. When we restrict $D$ to singleton sets, we see
\[D(\{a\},\{b\}) = \delta(\{a,b\}) - \frac{1}{2} \delta(\{a\}) - \frac{1}{2} \delta(\{b\}) = \delta(\{a,b\}).\]
Hence the induced metric of $(X,\delta)$ is isometric to $D$ restricted to singletons and, by Proposition~\ref{Prop:NegDivMetric}, $D$ is $L_1$-embeddable.	
\end{proof}

The relationship between $L_1$-embeddable metrics and $L_1$-embeddable diversities is straight-forward: the induced metric of any $L_1$-embeddable diversity is an $L_1$-embeddable metric and, conversely, every $L_1$-embeddable metric is the induced metric of some $L_1$-embeddable diversity.

The situation for negative type diversities is a bit more nuanced. The induced metric of any negative type diversity is an $L_1$-embeddable metric, and hence a negative type metric. But metrics which are negative but {\em not} $L_1$-embeddable are not the induced metrics of any negative type diversity.
  
 Furthermore, as there are metrics which require distortion $\Omega(\log n)$ to embed into $L_1$, there are diversities which will require distortion $\Omega(\log n)$  to embed into a negative type diversity. In contrast, any metric can be embedded in a negative type metric with only $O(\sqrt{\log n} \log \log n)$ distortion.

\section{A universal embedding for negative type diversities} \label{sec:embed}

We provide an example of a diversity  is that is {\em universal} for negative type diversities, in the sense that every (finite) negative type diversity can be embedded into this diversity, and every finite subset of this diversity induces a negative type diversity.
Define $(\Re^k,\delta_{\mathrm{neg}})$ by 
\[\dn(A) = \sum_{i=1}^k \max\{a_i: \ba \in A\} - \min\left\{\sum_{i=1}^k a_i:\ba\in A\right\}.\]
First we show that we can restrict our attention to embeddings where all vectors have zero sum.

\begin{lem} \label{lem:zeroNeg}
There is an embedding $\phi$ of $(\Re^k,\dn)$ into $(\Re^{k+1},\dn)$ such that $\bone^T \phi(\bx) = 0$ for all $\bx \in \Re^k$. 
\end{lem}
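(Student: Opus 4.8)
The plan is to write the embedding down explicitly rather than argue for its existence abstractly. Define $\phi : \Re^k \to \Re^{k+1}$ by
\[
\phi(\ba) = \Bigl(a_1,\dots,a_k,\,-\textstyle\sum_{i=1}^{k} a_i\Bigr)
\qquad\text{for } \ba = (a_1,\dots,a_k)\in\Re^k .
\]
This map is manifestly injective, since the first $k$ coordinates of $\phi(\ba)$ already recover $\ba$, and it satisfies $\bone^T\phi(\ba) = \sum_{i=1}^k a_i - \sum_{i=1}^k a_i = 0$ for every $\ba$, so the required zero-sum property holds by construction. It therefore remains only to verify that $\phi$ preserves $\dn$, i.e.\ that $\dn(\{\phi(\ba):\ba\in A\}) = \dn(A)$ for every finite $A\subseteq\Re^k$.

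For this I would fix a finite $A\subseteq\Re^k$ and compute $\dn(\phi(A))$ directly from the definition. Two observations carry the argument. First, every point $\phi(\ba)$ has coordinate sum $0$, so $\min\bigl\{\sum_{i=1}^{k+1}\phi(\ba)_i : \ba\in A\bigr\} = 0$ and the subtracted term in the definition of $\dn(\phi(A))$ vanishes. Second, the remaining term splits along coordinates as
\[
\sum_{i=1}^{k+1}\max\{\phi(\ba)_i:\ba\in A\}
= \sum_{i=1}^{k}\max\{a_i:\ba\in A\} \;+\; \max\Bigl\{-\textstyle\sum_{i=1}^k a_i : \ba\in A\Bigr\},
\]
and the last summand equals $-\min\bigl\{\sum_{i=1}^k a_i:\ba\in A\bigr\}$. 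Adding the two pieces gives exactly $\dn(A)$, which is the desired identity.

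I do not expect a genuine obstacle here: the statement is essentially a normalisation device, the diversity analogue of the standard fact that an $L_1$ embedding may be translated so that a designated point lands at the origin. The only mild care needed is bookkeeping the index sets $\{1,\dots,k\}$ versus $\{1,\dots,k+1\}$ and the sign in the new $(k+1)$-st coordinate; once the two observations above are in place, the equality $\dn(\phi(A)) = \dn(A)$ follows by a one-line calculation, completing the proof of the lemma.
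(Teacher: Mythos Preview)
Your proof is correct and follows essentially the same approach as the paper: you define the same explicit map $\phi(\ba)=(a_1,\dots,a_k,-\sum_i a_i)$, note that the minimum-of-coordinate-sums term in $\dn(\phi(A))$ vanishes, and split the maximum sum over the first $k$ coordinates plus the new one to recover $\dn(A)$. The only addition is your explicit remark on injectivity, which the paper leaves implicit.
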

\begin{proof}
The map 
\[\phi:\Re^k \rightarrow \Re^{k+1}:\bx \mapsto (\bx,-\bone^T\bx)\]
satisfies the condition that $\bone^T \phi(\bx) = 0$ for all $\bx$. For all finite $A \subseteq \Re^k$ we have
\begin{align*}
\dn(\phi(A)) & = \sum_{i=1}^k \max\{a_i:\ba \in A\} + \max\{-\bone^T\ba:\ba \in A\} - 0 \\
& = \dn(A).
\end{align*}
\end{proof} 
 
 \begin{thm} \label{thm:embedRk}
 A finite diversity $(X,\delta)$ is of negative type if and only if can be embedded in $(\Re^k,\dn)$ for some $k$.
 \end{thm}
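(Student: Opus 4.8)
The plan is to prove both directions after reducing, via Lemma~\ref{lem:zeroNeg}, to embeddings $\psi\colon X\to\Re^k$ whose image lies in the zero–sum hyperplane $\{\ba\in\Re^k:\bone^T\ba=0\}$, so that $\dn(\psi(A))=\sum_{i=1}^k\max_{a\in A}\psi(a)_i$ with the ``$-\min$'' part gone. The one computational tool I would set up first is the layer‑cake expansion of a single maximum: if $v\colon X\to\Re$ takes distinct values $v_{(1)}<\dots<v_{(r)}$ and $U_s=\{a:v_a\ge v_{(s)}\}$ (so $U_1=X$), then for nonempty $A$
\[
\max_{a\in A}v_a=\sum_{s=1}^{r}c_s\,\mathbf 1[A\cap U_s\neq\emptyset],\qquad c_1=v_{(1)},\quad c_s=v_{(s)}-v_{(s-1)}>0\ \ (s\ge2),
\]
together with the identity $\mathbf 1[(A\cup B)\cap U\neq\emptyset]=1-\mathbf 1[A\subseteq X\setminus U]\,\mathbf 1[B\subseteq X\setminus U]$.

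For ``embeddable $\Rightarrow$ negative type'', fix $\bx\in\Re^{\pow(X)}$ with $\bx_\emptyset=0$ and $\bone^T\bx=0$. Since $\delta(A\cup B)=\sum_i\max_{c\in A\cup B}\psi(c)_i$, it suffices to show $\sum_{A,B}\bx_A\bx_B\max_{c\in A\cup B}v_c\le0$ for a single coordinate function $v$; summing over $i$ then gives the negative‑type inequality. Substituting the layer‑cake expansion and using $\bone^T\bx=0$, the double sum reduces to $-\sum_{s=1}^{r}c_s\big(\sum_{A\subseteq X\setminus U_s}\bx_A\big)^2$. The $s=1$ term vanishes because $X\setminus U_1=\emptyset$ and $\bx_\emptyset=0$, so the unrestricted sign of $c_1$ is harmless, while every $s\ge2$ term is $\le0$ since $c_s>0$. (Along the way one checks, routinely, that $(\Re^k,\dn)$ really is a diversity: $\sum_i\max_A a_i\ge\min_A\sum_i a_i$ yields non‑negativity and point separation, and the triangle inequality follows by applying $\max\{\alpha,\beta\}+\max\{\beta,\gamma\}\ge\max\{\alpha,\gamma\}+\beta$ coordinatewise at any element of the middle set.)

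For the converse, let $(X,\delta)$ be a negative‑type diversity and take $\lambda$ as in Theorem~\ref{thm:negLambda}, so $\lambda_C\ge0$ for all $\emptyset\neq C\subsetneq X$ and $\delta(A)=-\sum_{C\supseteq A}\lambda_C$; in particular $\delta(X)=-\lambda_X\ge0$. The axiom $\delta(\{a\})=0$ forces $\sum_{\emptyset\neq C\subsetneq X,\,a\in C}\lambda_C=\delta(X)$ for every $a$, hence, setting $\Lambda:=\sum_{\emptyset\neq C\subsetneq X}\lambda_C$, also $\sum_{\emptyset\neq C\subsetneq X,\,a\notin C}\lambda_C=\Lambda-\delta(X)=:\kappa$, a number not depending on $a$. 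I would then take $k=2^{|X|}-1$ and define $\psi\colon X\to\Re^k$, with one coordinate per nonempty proper subset $C\subsetneq X$ plus one extra coordinate $\ast$, by
\[
\psi(a)_C=\lambda_C\,\mathbf 1[a\notin C],\qquad \psi(a)_\ast=-\kappa .
\]
Then $\bone^T\psi(a)=\kappa-\kappa=0$ for every $a$, so $\dn(\psi(A))=\sum_C\max_{a\in A}\psi(a)_C+\max_{a\in A}\psi(a)_\ast$, which for nonempty $A$ equals $\sum_C\lambda_C\mathbf 1[A\not\subseteq C]-\kappa=\Lambda-\kappa-\sum_{\emptyset\neq C\subsetneq X,\,A\subseteq C}\lambda_C=\delta(X)-\sum_{\emptyset\neq C\subsetneq X,\,A\subseteq C}\lambda_C=-\sum_{C\supseteq A}\lambda_C=\delta(A)$, using \eqref{def:invlamba}; for $A=\emptyset$ both sides are $0$. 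Hence $(X,\delta)$ embeds in $(\Re^k,\dn)$.

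I expect the converse to be the main obstacle: the issue is to guess the correct elementary embedding. The point is that the maximum over $A$ of the indicator $\mathbf 1[a\notin C]$ reproduces exactly the term $\lambda_C\,\mathbf 1[A\subseteq C]$ appearing in $\delta$, and that the diversity axiom $\delta(\{a\})=0$ is precisely what forces $\sum_C\lambda_C\,\mathbf 1[a\notin C]$ to be a constant function of $a$ — so that a single constant coordinate $\psi(a)_\ast$ restores membership in the zero‑sum hyperplane, where $\dn$ acts additively on coordinates. The forward direction is essentially routine once the layer‑cake identity is in hand; its only delicate point is that $c_1$, the least value of $v$, has unrestricted sign, which causes no trouble because its up‑set is all of $X$ and it therefore multiplies only $\bx_\emptyset^2=0$.
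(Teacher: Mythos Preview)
Your proof is correct. The converse construction is essentially the paper's: the paper takes coordinates indexed by all nonempty $B\subseteq X$ and sets $\phi(x)_B=-\lambda_B\,\mathbf 1[x\in B]$, which already lands in the zero-sum hyperplane because $\sum_{B\ni x}\lambda_B=-\delta(\{x\})=0$. Your $\psi(a)_C=\lambda_C\,\mathbf 1[a\notin C]=\lambda_C+\phi(a)_C$ is just a coordinate-wise translation of this (and your extra constant coordinate plays the role of the paper's $B=X$ coordinate, which is itself constant), so the two embeddings differ only by a shift that $\dn$ ignores.

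The forward direction is organised differently. The paper verifies negative type via Theorem~\ref{thm:negLambda}: it computes $\lambda_A=\sum_{B\supseteq A}(-1)^{|A|+|B|+1}\max_{b\in B} b_i$ for each coordinate, orders $\sX$ by the $i$th coordinate, and shows this alternating sum collapses to a single nonnegative gap $x_{(m(A)+1)i}-x_{m(A)i}$ (or zero). You instead feed the layer-cake identity $\max_{a\in A}v_a=\sum_s c_s\mathbf 1[A\cap U_s\neq\emptyset]$ directly into the quadratic form and obtain $-\sum_s c_s\big(\sum_{A\subseteq X\setminus U_s}\bx_A\big)^2$. These are really the same computation viewed from opposite ends of Lemma~\ref{lem:phisquared}: your $c_s$ with $s\ge2$ are precisely the nonzero $\lambda_{X\setminus U_s}$ the paper finds. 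Your route is a bit more self-contained (it bypasses the M\"obius machinery), while the paper's route makes the structure of $\lambda$ for $\dn$ explicit, which is informative in its own right.
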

 \begin{proof}
 First we show that if $\sX$ is any finite subset of $\Re^k$ then $(\sX,\dn)$ is of negative type. By Lemma~\ref{lem:zeroNeg} we can assume, without loss of generality, that $\bone^T \bx = 0$ for all $\bx \in \sX$. 
 
 For each $A \subseteq \sX$, we have
\begin{align*}
\lambda_A & = \sum_{B:A \subseteq B \subseteq \sX} (-1)^{|A|+|B|+1} \delta(B)\\ 
& = \sum_{i=1}^k \sum_{B:A \subseteq B \subseteq \sX} (-1)^{|A|+|B|+1} \max\{b_i:b \in B\}.
\end{align*}
By Theorem~\ref{thm:negLambda} we need to show that $\lambda_A \geq 0$ for $A \neq \emptyset, \sX$.
Fix $i$, and suppose that that $\sX$ is ordered as $\sX = \{\bx_1,\ldots,\bx_n\}$ such that 
\[x_{1i} \leq x_{2i} \leq \cdots \leq x_{ni}.\]
For $B \subseteq \sX$ define $m(B) = \max\{j:\bx_{j} \in B\}$. Then
\begin{align*}
\sum_{B:A \subseteq B \subseteq \sX} (-1)^{|A|+|B|+1} \max\{b_i:b \in B\} & = \sum_{j=1}^n x_{ji} \sum_{\substack{B:A \subseteq B \subseteq \sX\\m(B) = j}} (-1)^{|A|+|B|+1} \\
\intertext{
If there is $\bx_\ell \in \sX \setminus A$ such that $\ell < m(A)$, or if $m(A) <  j-1$ then}
\sum_{\substack{B:A \subseteq B \subseteq \sX\\m(B) = j}} (-1)^{|A|+|B|+1} &= 0 \\ \intertext{otherwise}
\sum_{j=1}^n x_{ji} \sum_{\substack{B:A \subseteq B \subseteq \sX\\m(B) = j}} (-1)^{|A|+|B|+1} &= x_{(m(A)+1)i} - x_{m(A)i}  
 \geq 0.
\end{align*}
Hence $\lambda_A \geq 0$ for all $A \subset \sX$ such that $A \neq \sX$, and $(\sX,\dn)$ is of negative type.

For the converse, suppose $|X| = n$ and $(X,\delta)$ is of negative type. Let $\lambda$ be given by \eqref{def:lambda}, let $k = 2^{n}-1$ and suppose that the dimensions of $\Re^k$ are indexed by nonempty subsets of $X$. Let $\phi:X \rightarrow \Re^k$ be the map with
\begin{align*}
\phi(x)_B & = \begin{cases} - \lambda_B & x \in B \\
0 & \mbox{otherwise,} \end{cases}\\ 
\intertext{for all $x \in X$ and nonempty $B \subseteq X$. Then}
\max\{\phi(a)_B:a \in A\} &= \begin{cases} -\lambda_B & A \subseteq B \\
0  & \mbox{otherwise.} \end{cases}\\ \intertext{For all $x \in X$,}
\sum_B \phi(x)_B &= -\sum_{B:x \in B \subseteq X} \lambda_B = \delta(\{x\})= 0\\ \intertext{
and for all $A \subseteq X$ we have}
\dn(\phi(A))  = -\sum_{B:A \subseteq B} \lambda_A = \delta(A),
\end{align*}
where we have used expression \eqref{def:invlamba} in Theorem~\ref{thm:negLambda}.
\end{proof}

\section{$L_1$ embedding} \label{sec:L1}

Much of the recent interest in negative type metrics relates to embedding into $L_1$ \cite{naor17Integrality,Lee2005a,Krauthgamer2005,krauthgamer2009improved,arora2009expander,khot2015unique,Lee2005}. Every $n$-point negative type metric can be embedded into $L_1$ with distortion at most $O(\sqrt{\log n}\log\log n)$, with a lower bound of 
$\Omega(\sqrt{{\log n}})$ \cite{arora2008euclidean}. These results lead to the  current best approximation bound for sparsest cut and other problems.

Here we investigate the problem of embedding negative type diversities into $L_1$. The aim is to investigate whether the approximation algorithms based on embeddings of negative type metrics can be extended to algorithms based on embedding negative type diversities. We have already shown that algorithms of \cite{linial1995geometry} and others based on embedding metrics in $L_1$ have direct analogies for diversities \cite{BryantTupper2014}. 

Every $L_1$-embeddable diversity is negative; we start by characterising which negative type diversities are $L_1$-embeddable.

\begin{prop}
Let $(X,\delta)$ be a negative type diversity and let $\lambda$ be given by \eqref{def:lambda}. Then $(X,\delta)$ is $L_1$-embeddable if and only if $\lambda_A = \lambda_{\overline{A}}$ for all $A \neq \emptyset,X$.
\end{prop}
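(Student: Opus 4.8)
The plan is to combine two ingredients: the description of $L_1$-embeddable diversities as exactly the non-negative combinations of cut diversities $\delta_{C|\overline{C}}$ (the direction "$L_1$-embeddable $\Rightarrow$ combination of cuts" is used in the proof of the Proposition above and established in \cite{BryantTupper2014}; the reverse direction is immediate, since each cut diversity is $L_1$-embeddable and a non-negative combination of $L_1$-embeddable diversities is $L_1$-embeddable), together with the linearity of the transform $\delta\mapsto\lambda$ given by \eqref{def:lambda}. Recall from the proof of that Proposition that the cut diversity $\delta_{C|\overline{C}}$ has $\lambda$-vector with $\lambda_C=\lambda_{\overline{C}}=1$, $\lambda_X=-1$, and all remaining coordinates $0$; in particular $\lambda_A=\lambda_{\overline{A}}$ for every $A\neq\emptyset,X$ when $\delta$ is a single cut diversity.

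For the forward implication, suppose $(X,\delta)$ is $L_1$-embeddable and write $\delta=\sum w_\gamma\,\delta_{C|\overline{C}}$, the sum ranging over cuts $\gamma=\{C,\overline{C}\}$ with $C\neq\emptyset,X$ and all $w_\gamma\geq 0$. By linearity of \eqref{def:lambda}, for each $A\neq\emptyset,X$ the coordinate $\lambda_A$ receives a nonzero contribution only from the summand indexed by $\gamma=\{A,\overline{A}\}$, so $\lambda_A=w_{\{A,\overline{A}\}}$. Since the right-hand side is unchanged on replacing $A$ by $\overline{A}$, we conclude $\lambda_A=\lambda_{\overline{A}}$.

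For the converse, assume $\lambda_A=\lambda_{\overline{A}}$ for all $A\neq\emptyset,X$. As $(X,\delta)$ is negative type, Theorem~\ref{thm:negLambda} gives $\lambda_A\geq 0$ for all such $A$, so we may form $\delta'=\sum \lambda_C\,\delta_{C|\overline{C}}$, summing over cuts $\gamma=\{C,\overline{C}\}$ with $C\neq\emptyset,X$; the coefficient $\lambda_C$ is well defined by the hypothesis $\lambda_C=\lambda_{\overline{C}}$. Being a non-negative combination of cut diversities, $\delta'$ is $L_1$-embeddable, and by the linearity computation of the previous paragraph its $\lambda$-vector $\lambda'$ satisfies $\lambda'_A=\lambda_A$ for every $A\neq\emptyset,X$. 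It remains to show $\delta=\delta'$. Applying Moebius inversion of \eqref{def:lambda}, i.e. the identity \eqref{def:invlamba}, to each of $\delta$ and $\delta'$ and subtracting, for any non-empty $A$ every superset $B\supseteq A$ with $B\neq X$ contributes $\lambda_B-\lambda'_B=0$, so $\delta(A)-\delta'(A)=-(\lambda_X-\lambda'_X)$ is a constant independent of $A$; evaluating at a singleton $A=\{x\}$, where both $\delta$ and $\delta'$ vanish, forces this constant to be $0$. Hence $\delta(A)=\delta'(A)$ for all non-empty $A$, and $\delta(\emptyset)=\delta'(\emptyset)=0$, so $\delta=\delta'$ is $L_1$-embeddable.

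The one point needing care is that knowing $\lambda_A=\lambda'_A$ merely for $A\neq\emptyset,X$ does not by itself give $\delta=\delta'$; this gap is precisely what the short Moebius-inversion argument with singletons closes, by pinning down the remaining coordinate $\lambda_X$. Everything else is bookkeeping with the linear transform \eqref{def:lambda} and the cut-diversity description of $L_1$-embeddability.
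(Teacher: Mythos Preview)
Your proof is correct and follows essentially the same route as the paper: compute the $\lambda$-vector of a cut diversity, use linearity to get the forward implication, and for the converse build the candidate $L_1$ diversity $\delta'=\sum_{\{C,\overline{C}\}}\lambda_C\,\delta_{C|\overline{C}}$ (the paper writes it as $\tfrac{1}{2}\sum_{C}\lambda_C\,\delta_{C|\overline{C}}$, which is the same thing once $\lambda_C=\lambda_{\overline{C}}$) and argue that it has the same $\lambda$-vector as $\delta$. The one place where your write-up is actually more careful than the paper's is the final identification $\delta=\delta'$: the paper simply says ``the map from $\delta$ to $\lambda$ is invertible,'' but only the coordinates $\lambda_A$ with $A\neq\emptyset,X$ were matched; your M\"obius-inversion argument pinning down $\lambda_X$ via a singleton closes exactly that gap.
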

\begin{proof}
Suppose $\delta = \delta_{C|\overline{C}}$. First, suppose $\delta_{C|\overline{C}}(A)=1$. For any $B$ with $A \subseteq B$ we have $\delta_{C|\overline{C}}(B)=1$ too.  Then
\begin{align*}
\lambda_A & = \sum_{B:A \subseteq B} (-1)^{|A|+|B|+1} \delta_{C|\overline{C}}(B) \\
&= \sum_{B:A \subseteq B} (-1)^{|A|+|B|+1} 
\end{align*}
which is $0$ unless $A=X$, in which case it is $-1$.
Now suppose that $A \subseteq C$. We write $\lambda_A$ using $B= A \cup D$ as
\begin{align*}
\lambda_A & = \sum_{D: D \cap A = \emptyset} (-1)^{2|A|+|D|+1} \delta_{C|\overline{C}}(A \cup D) 
\\
&= \sum_{D: D \cap A = \emptyset, D \cap \overline{C} \neq \emptyset} (-1)^{|D|+1}  
\end{align*}
which is $0$ unless $A=C$, in which case it is $-1$. Similarly for $A \subseteq \overline{C}$, $\delta_A = -1$ if $A=\overline{C}$ and is $0$ otherwise. Summarizing, for cut diversities $\delta_{C|\overline{C}}$, $\lambda_X=\lambda_C=\lambda_{\overline{C}}=-1$, and all other $\lambda_A =0$. 
So, if $\delta$ is a cut diversity then $\lambda_A = \lambda_{\overline{A}}$ for all $A \neq \emptyset, X$. This will hold also for any $L_1$ diversity, since $L_1$ diversities are non-negative linear combinations of cut diversities.

For the converse, suppose that the diversity $\delta$ of negative type is such that $\lambda_A = \lambda_{\overline{A}}$ for all $A$. Let $\tilde{\delta}$ be the $L_1$-embeddable diversity 
\[\tilde{\delta} = \frac{1}{2} \sum_{C:C \subseteq X} \lambda_C \delta_{C|\overline{C}}\]
and with $\tilde{\lambda}$  given by
\[\tilde{\lambda}_A = \sum_{B:A \subseteq B} (-1)^{|A|+|B|+1} \tilde{\delta}(B).\]
Then, when $A \neq \emptyset, X$,
\begin{eqnarray*}
\tilde{\lambda}_A & = & \sum_{C} \frac{1}{2} \lambda_C \sum_{B : A \subseteq B} (-1)^{|A|+|B|+1} 
\delta_{C | \overline{C}} (B) \\
& = &  (\lambda_A + \lambda_{\overline{A}})/2 = \lambda_A.
\end{eqnarray*}
So $\delta$ and $\tilde{\delta}$ have the same $\lambda$ vector.
As the map from $\delta$ to $\lambda$ is invertible, $\tilde{\delta} = \delta$ and $\delta$ is $L_1$-embeddable.
\end{proof}

There are negative type metrics on $n$ points which cannot be embedded in $L_1$ with distortion less than $\Omega(\sqrt{\log n})$. However, as the induced metric of any negative type diversity is $L_1$-embeddable the general bound for metrics does not imply a bound for diversities. We instead follow a different strategy to show that there are negative type diversities which still require at least $\Omega(\sqrt{\log n})$ distortion to embed the diversity into $L_1$. Our bound is based on connections in \cite{BryantTupper2014} between $L_1$ embedding of diversities and sparsest cut problems in hypergraphs.

Given $m>2$ let $n = \binom{2m}{m}$ and let $\sX$ be the set $\binom{[2m]}{m}$ of all subsets of $[2m] = \{1,2,\ldots,2m\}$ of cardinality $m$. Let $(\sX,\dh)$ be the diversity with 
\[\delta(\sA) = \left| \bigcup_{A \in \sA} A \right| - m\]
for all non-empty $\sA \subseteq \sX$.

\begin{prop}
$(\sX,\dh)$ is of negative type.
\end{prop}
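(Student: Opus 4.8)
The plan is to verify the criterion of Theorem~\ref{thm:negLambda}: compute the Möbius coefficients $\lambda_\sA$ associated to $\dh$ and check that $\lambda_\sA \geq 0$ for every $\sA \neq \emptyset, \sX$. Writing $\dh(\sA) = \big|\bigcup_{A \in \sA} A\big| - m$, the additive constant $-m$ contributes nothing to $\lambda_\sA$ once $\sA \neq \emptyset, \sX$ (the alternating sum $\sum_{\sB : \sA \subseteq \sB}(-1)^{|\sA|+|\sB|+1}$ vanishes), so it suffices to analyse the set function $f(\sA) = \big|\bigcup_{A \in \sA} A\big|$. The key structural observation is that $f$ is a coverage function: for $j \in [2m]$ let $U_j = \{A \in \sX : j \in A\} \subseteq \sX$, so that $f(\sA) = \big|\{\, j : \sA \cap U_j \neq \emptyset \,\}\big| = \sum_{j=1}^{2m} \bone[\sA \cap U_j \neq \emptyset]$. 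Thus $f$ decomposes as a sum over coordinates $j$, exactly as the $\max$-sum decomposed coordinatewise in the proof of Theorem~\ref{thm:embedRk}, and it is enough to show that each indicator term $g_j(\sA) = \bone[\sA \cap U_j \neq \emptyset]$ has nonnegative $\lambda$-coefficients.

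For a fixed $j$, I would compute directly
\[
\lambda_\sA^{(j)} = \sum_{\sB : \sA \subseteq \sB \subseteq \sX} (-1)^{|\sA|+|\sB|+1}\, g_j(\sB).
\]
Since $g_j(\sB) = 1 - \bone[\sB \subseteq \sX \setminus U_j]$, the constant $1$ again contributes $0$ when $\sA \neq \sX$, so $\lambda_\sA^{(j)} = -\sum_{\sB : \sA \subseteq \sB \subseteq \sX \setminus U_j} (-1)^{|\sA|+|\sB|+1} = \sum_{\sB : \sA \subseteq \sB \subseteq \sX \setminus U_j} (-1)^{|\sA|+|\sB|}$. This alternating sum over the Boolean interval $[\sA, \sX \setminus U_j]$ is $0$ unless that interval is a single point, i.e.\ unless $\sA = \sX \setminus U_j$, in which case it equals $1$; and it is $0$ whenever $\sA \not\subseteq \sX \setminus U_j$. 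In all cases $\lambda_\sA^{(j)} \in \{0,1\} \geq 0$. Summing over $j$ gives $\lambda_\sA = \sum_{j=1}^{2m} \lambda_\sA^{(j)} \geq 0$ for every $\sA \neq \emptyset, \sX$, and Theorem~\ref{thm:negLambda} then yields that $(\sX,\dh)$ is a negative type diversity. (One should also note in passing that $\dh(\sA) = 0$ when $|\sA| \le 1$: for a single $m$-set $A$ we get $|A| - m = 0$, as required by the hypothesis of Theorem~\ref{thm:negLambda}.)

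The only genuinely delicate point is the bookkeeping in the coverage decomposition — making sure the additive constant $-m$ and the constant $1$ inside $g_j$ really drop out (which hinges precisely on excluding $\sA = \emptyset$ and $\sA = \sX$, the cases Theorem~\ref{thm:negLambda} does not ask about), and correctly identifying when the Boolean interval $[\sA, \sX\setminus U_j]$ collapses to a point. There is no hard inequality or extremal argument here: once $f$ is recognised as a nonnegative combination of coverage indicators, each of which is of "negative type" by the same Möbius cancellation used throughout Section~\ref{sec:char}, the result follows. In fact this shows more generally that \emph{every} coverage-type diversity $\sA \mapsto \big|\bigcup_{A\in\sA}A\big| - (\text{const})$ is of negative type, which fits the theme of $L_1$-like expressions being negative.
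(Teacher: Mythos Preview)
Your argument is correct. The paper's proof is a one-liner: map each $A \in \sX$ to its indicator vector $\chi_A \in \{0,1\}^{2m}$, observe that this is an isometric embedding of $(\sX,\dh)$ into $(\Re^{2m},\dn)$ (since $\sum_i \max_{A \in \sA}(\chi_A)_i = \big|\bigcup \sA\big|$ and $\min_{A \in \sA} \bone^T\chi_A = m$), and invoke Theorem~\ref{thm:embedRk}. Your route bypasses Theorem~\ref{thm:embedRk} entirely and goes straight to the $\lambda$-criterion of Theorem~\ref{thm:negLambda}, using the coverage decomposition $\big|\bigcup \sA\big| = \sum_j \bone[\sA \cap U_j \neq \emptyset]$ and an explicit M\"obius computation on each summand. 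The two are closely related --- your coordinatewise computation is essentially the specialisation to $\{0,1\}$-valued vectors of the forward direction of the proof of Theorem~\ref{thm:embedRk}. The paper's approach is shorter; yours is more self-contained, pins down exactly which coefficients are nonzero (namely $\lambda_\sA = |\{\,j : \sA = \sX \setminus U_j\,\}|$ for $\sA \neq \emptyset,\sX$), and, as you note, yields the slightly more general statement that any coverage-type diversity is of negative type.
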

\begin{proof}
A diversity isomorphism from $(\sX,\dh)$ to $(\Re^n,\dn)$ is given by mapping each set in $\sX$ to the corresponding vector of $m$ ones and $n-m$ zeros. The diversity $(\sX,\dh)$ is then of negative type by Theorem~\ref{thm:embedRk}.
\end{proof}

Our lower bound for embedding $(\sX,\dh)$ into $L_1$ is based on a lower bound for hypergraph cuts.
Suppose that $\sU$ is a non-empty subset of $\sX$. Define
\begin{align*}
\Int(\sU) & = \left\{B \in \binom{[2m]}{m+1}:\binom{B}{m} \subseteq \sU \right\} \\
\Ext(\sU) & = \left\{B \in \binom{[2m]}{m+1}:\binom{B}{m} \subseteq \overline{\sU} \right\} \\
\partial \sU & = \binom{[2m]}{m+1} \setminus (\Int(\sU) \cup \Ext(\sU)).
\end{align*}
We derive a lower bound for $\frac{|\partial \sU|}{|\sU||\overline{\sU}|}$. Without loss of generality we assume $|\Int(\sU)| \leq |\Ext(\sU)|$, swapping $\sU$ for $\overline{\sU}$ if this is not the case. Hence
\begin{align}
 |\Int(\sU)|  \leq \frac{1}{2} \binom{2m}{m+1} \label{eq:IntUupper}.
 \end{align}

\begin{lem} \label{Hypercut}
\[\frac{|\partial \sU|}{|\sU|} \geq  \frac{m}{m+1} \left(1+5 \sqrt{2m}\right)^{-1}.\] 
\end{lem}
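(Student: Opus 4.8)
The plan is to bound $|\partial\sU|$ below in two complementary ways, each in terms of $|\Int(\sU)|$ — one by crude counting, one by isoperimetry — and then take the better of the two. For the counting bound I would double-count the pairs $(A,B)$ with $A\in\binom{[2m]}{m}$, $B\in\binom{[2m]}{m+1}$, $A\subset B$ and $A\in\sU$. Each $A\in\sU$ is contained in exactly $m$ such $B$, so the total is $m|\sU|$; on the other hand the contribution of a fixed $B$ is $|\binom{B}{m}\cap\sU|$, which equals $m+1$ if $B\in\Int(\sU)$, equals $0$ if $B\in\Ext(\sU)$, and lies in $\{1,\dots,m\}$ if $B\in\partial\sU$. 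Hence $m|\sU|\le(m+1)|\Int(\sU)|+m|\partial\sU|$, i.e.
\[|\partial\sU|\ \ge\ |\sU|-\tfrac{m+1}{m}\,|\Int(\sU)|.\]
This is strong when $\Int(\sU)$ is small but degenerates exactly when $|\Int(\sU)|$ is close to $\tfrac{m}{m+1}|\sU|$, so a complementary estimate is needed.

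For the isoperimetric bound, regard $\binom{[2m]}{m+1}$ as the vertex set of the Johnson graph, with $B\sim B'$ iff $|B\cap B'|=m$. The key observation is that $\Int(\sU)$ and $\Ext(\sU)$ are non-adjacent in this graph: if $B\in\Int(\sU)$ and $B\sim B'$, then $A:=B\cap B'$ is an $m$-subset of both $B$ and $B'$, and $A\in\binom{B}{m}\subseteq\sU$, so $A\notin\overline{\sU}$ and therefore $B'\notin\Ext(\sU)$. Consequently the outer vertex-boundary of $\Int(\sU)$ in the Johnson graph is contained in $\partial\sU$. Since $|\Int(\sU)|\le\tfrac12\binom{2m}{m+1}$ by \eqref{eq:IntUupper}, I would apply the vertex-isoperimetric inequality for $J(2m,m+1)$ — the extremal-hypergraph input alluded to in the introduction, whose extremisers are balls in the Johnson metric and which yields a relative boundary of order $\Omega(1/\sqrt m)$ — to $\Int(\sU)$, obtaining
\[|\partial\sU|\ \ge\ \frac{1}{5\sqrt{2m}}\,|\Int(\sU)|\]
(the constant $\tfrac15$ being comfortably within the factor the sharp inequality provides).

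To combine, write $t=|\Int(\sU)|$. The two bounds give $|\partial\sU|\ge\max\{|\sU|-\tfrac{m+1}{m}t,\ \tfrac{1}{5\sqrt{2m}}t\}$ for every admissible $t\ge0$. The right-hand side is the maximum of a decreasing and an increasing linear function of $t$, hence is minimised where the two coincide; there it equals $|\sU|\big/\bigl(1+\tfrac{m+1}{m}\cdot 5\sqrt{2m}\bigr)$. Finally one checks $1+\tfrac{m+1}{m}\cdot 5\sqrt{2m}\le\tfrac{m+1}{m}\bigl(1+5\sqrt{2m}\bigr)$, which reduces to $1\le\tfrac{m+1}{m}$, and the claimed bound $|\partial\sU|\ge\tfrac{m}{m+1}\bigl(1+5\sqrt{2m}\bigr)^{-1}|\sU|$ follows.

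The main obstacle is the isoperimetric step: everything else is elementary counting, but the whole estimate rests on the sharp $\Theta(1/\sqrt m)$ vertex-isoperimetric inequality for the Johnson graph $J(2m,m+1)$. A naive Kruskal--Katona ``two-step shadow'' estimate only gives a rate of order $1/m$, which is too weak to overcome the $\sqrt{2m}$ in the statement, so one genuinely needs the extremal result (a Harper-type compression argument with Johnson-metric balls as the extremal configurations) rather than a quick shadow computation.
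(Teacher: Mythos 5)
Your proof is correct and follows essentially the same route as the paper: the key containment $b(\Int(\sU))\subseteq\partial\sU$ and the approximate vertex-isoperimetric inequality for the Johnson graph $J(2m,m+1)$ (Theorem 2 of Christofides--Ellis--Keevash, which after using $1-\alpha\geq\tfrac12$ gives exactly your constant $\tfrac{1}{5\sqrt{2m}}$) are the paper's two main ingredients as well. The remaining differences are cosmetic: you double-count incidences between $\sU$ and $(m+1)$-sets directly and then optimise over $t=|\Int(\sU)|$, whereas the paper runs the equivalent shadow count through $\Ext(\sU)$ and $|\overline{\sU}|$ and substitutes; both yield the stated bound.
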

\begin{proof}
Let $\sJ = (\sY,\sE)$ be the graph with vertex set $\sY = \binom{[2m]}{m+1}$ and edge set 
\[\sE = \{\{A,B\}:|A \cap B| = m\}.\]
 The graph  $\sJ$ is a Johnson graph \cite{brouwer2012distance}.  For any subset $\sY' \subseteq \sY$ we define the vertex boundary $b(\sY')$ by
\[b(\sY') = \{B \in \sY: B \not \in \sY' \mbox{ but there is $A \in \sY'$ such that $\{A,B\} \in \sE$} \}.\]
Theorem 2 of \cite{Christofides2013}  provides a lower bound on the size of $b(\sY')$. Let $\alpha = \frac{|\sY'|}{|\sY|}$, then
\begin{align}
|b(\sY')| &\geq \frac{1}{5} \sqrt{ \frac{ 2m }{(m+1)(m-1)} } \alpha (1-\alpha) \binom{2m}{m+1} \nonumber \\
& \geq \frac{1}{5} \sqrt{ \frac{ 2 }{m} } \binom{2m}{m+1} \alpha (1-\alpha)  \label{eq:Christofides}
\end{align}

Let $\sY' = \Int(\sU)$. Then $B \in b(\Int(\sU))$ implies that $B \not \in \Int(\sU)$ but that there is $A \in \Int(\sU)$ such that $\{A,B\} \in \sE$. Let $C = A \cap B$. As $A \in \Int(\sU)$ we have $C \in \binom{A}{m} \subseteq \sU$. Since $C \in \binom{B}{m}$ we have $\binom{B}{m} \not \subseteq \overline{\sU}$ and so $B \not \in \Ext(\sU)$. The only remaining possibility is that  $B \in \partial \sU$ and hence 
\begin{equation}
b(\Int(\sU)) \subseteq \partial \sU. \label{eq:bSubset}
\end{equation} 

Since $\sY' = \Int(\sU)$, we have that $\alpha = \frac{|\Int(\sU)|}{|\sY|}$. Then from 
\eqref{eq:IntUupper} we have $\alpha \leq 0.5$ and from  \eqref{eq:Christofides} and \eqref{eq:bSubset}  we have
\begin{align*}
|\partial \sU| & \geq |b(\Int(\sU))| \\
& \geq \frac{1}{5} \sqrt{ \frac{ 2 }{m} } \alpha (1-\alpha) \binom{2m}{m+1} \\
& \geq \frac{1}{10}   \sqrt{ \frac{ 2 }{m} } |\Int(\sU)|.
\end{align*}
Reversing the inequality we now obtain
\begin{align*}
|\Ext(\sU)| & = |\sY| - |\Int(\sU)| - |\partial \sU| \\
& \geq |\sY|  - \left(\frac{10}{\sqrt{2}} \sqrt{m} \right) |\partial \sU| - |\partial \sU|.
\end{align*}
 Thus
 \begin{align}
 |\overline{\sU}| & \geq \left| \bigcup_{B \in \Ext(\sU)} \binom{B}{m} \right| \nonumber \\
 & \geq \frac{m+1}{m} |\Ext(\sU)| \label{mstep} \\
 & \geq \frac{m+1}{m} \left(|\sY| -\left(5 \sqrt{2m} + 1\right) |\partial \sU| \right). \nonumber
 \end{align}
 Inequality \eqref{mstep} holds since there are $\left| \binom{B}{m} \right| = m+1$ elements of $\overline{\sU}$ contained in each $B \in \Ext(\sU)$, and each of these could be contained in at most $m$ other sets $B' \in \Ext(\sU)$. A bound for $|\sU|$ follows immediately:
 \begin{align*}
 |\sU| & = |\sX| - |\overline{\sU}| \\
 & \leq \binom{2m}{m} - \frac{m+1}{m} \left(\binom{2m}{m+1} -\left(5 \sqrt{2m} + 1\right) |\partial \sU|\right) 
  \\
 & \leq \frac{m+1}{m} \left(1+5 \sqrt{2m}\right) |\partial \sU|
 \end{align*}
 \end{proof}
 
 The link between sparsest cuts of hypergraph and diversity embeddings was established by \cite{BryantTupper2014}. We make use of the same ideas. 
 
 \begin{thm} \label{thm:L1bound}
Let $k_1:=k_1(\dh)$ be the minimum distortion required to embed $(\sX,\dh)$ into $L_1$. 
Then 
\[k_1 \geq \frac{m}{4+20\sqrt{2m}} \in \Omega(\sqrt{m}).
\]
\end{thm}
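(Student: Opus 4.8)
The plan is to run the cut-based argument of Linial--London--Rabinovich, in the diversity form established in \cite{BryantTupper2014} that ties $L_1$ embeddings of diversities to hypergraph sparsest cut, with Lemma~\ref{Hypercut} supplying the isoperimetric input. Note that negative type plays no role in the statement: the bound is proved for $(\sX,\dh)$ directly. Suppose $(\sX,\dh)$ embeds into $L_1$ with distortion $k_1$, so there is an $L_1$-embeddable diversity $\delta_1$ with $\delta_1\le\dh\le k_1\delta_1$ pointwise; since $L_1$-embeddable diversities are non-negative combinations of cut diversities, write $\delta_1=\sum_\sU \mu_\sU\,\delta_{\sU|\overline{\sU}}$ with $\mu_\sU\ge 0$, the sum over cuts $\sU$ of $\sX$.

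I would test this embedding against two families of subsets of $\sX$. For each $B\in\binom{[2m]}{m+1}$ set $\sA_B=\binom{B}{m}\subseteq\sX$; then $\bigcup_{A\in\sA_B}A=B$, so $\dh(\sA_B)=1$, whereas $\delta_{\sU|\overline{\sU}}(\sA_B)$ equals $1$ exactly when $\sA_B$ meets both $\sU$ and $\overline{\sU}$, i.e. exactly when $B\in\partial\sU$. For each unordered pair $\{A,B\}$ of distinct elements of $\sX$ we have $\dh(\{A,B\})=|A\cup B|-m$, while $\delta_{\sU|\overline{\sU}}(\{A,B\})$ equals $1$ exactly when $\sU$ separates $A$ from $B$. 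Summing over the two families converts the cut expansion of $\delta_1$ into
\[\sum_{B}\delta_1(\sA_B)=\sum_\sU\mu_\sU\,|\partial\sU|,\qquad \sum_{\{A,B\}}\delta_1(\{A,B\})=\sum_\sU\mu_\sU\,|\sU|\,|\overline{\sU}|.\]

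From $\delta_1\le\dh$ the first sum is at most $\sum_B\dh(\sA_B)=\binom{2m}{m+1}=\tfrac{m}{m+1}n$. From $\dh\le k_1\delta_1$ the second sum is at least $\tfrac1{k_1}\sum_{\{A,B\}}(|A\cup B|-m)$, and a short double count (using $\binom{2m-1}{m}=n/2$) evaluates $\sum_{\{A,B\}}(|A\cup B|-m)=\tfrac12 n m\binom{2m-1}{m}=\tfrac14 n^2 m$. On the cut side, applying Lemma~\ref{Hypercut} to whichever of $\sU,\overline{\sU}$ has the smaller interior, and using $\partial\sU=\partial\overline{\sU}$ together with $\min(|\sU|,|\overline{\sU}|)\ge|\sU||\overline{\sU}|/n$, gives, for every cut,
\[|\partial\sU|\ \ge\ \frac{m}{m+1}\,(1+5\sqrt{2m})^{-1}\,\frac{|\sU|\,|\overline{\sU}|}{n},\]
hence $\sum_\sU\mu_\sU|\partial\sU|\ge \tfrac{m}{m+1}(1+5\sqrt{2m})^{-1}\tfrac1n\sum_\sU\mu_\sU|\sU||\overline{\sU}|$. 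Chaining the four inequalities ($\tfrac{m}{m+1}n \ge \sum\mu_\sU|\partial\sU| \ge \tfrac{m}{m+1}(1+5\sqrt{2m})^{-1}\tfrac1n\cdot\tfrac{n^2m}{4k_1}$) and cancelling should isolate $k_1\ge \tfrac{m}{4(1+5\sqrt{2m})}=\tfrac{m}{4+20\sqrt{2m}}\in\Omega(\sqrt m)$.

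The conceptual obstacle -- that the induced metric of $(\sX,\dh)$ is already $L_1$-embeddable, so no metric-level test can work -- is overcome precisely by using the higher-order sets $\sA_B$, whose $\dh$-value is $1$ while their image under a cut diversity records the Johnson-graph boundary $|\partial\sU|$; that boundary is then lower bounded by Lemma~\ref{Hypercut}. Within what remains, the only delicate point is upgrading Lemma~\ref{Hypercut}, which is stated after normalising a single $\sU$, to a bound holding simultaneously over all cuts (handled by the $\partial\sU=\partial\overline{\sU}$ symmetry above); the verification that $\delta_{\sU|\overline{\sU}}(\sA_B)$ is exactly the indicator of $B\in\partial\sU$, the two combinatorial evaluations, and the final arithmetic are routine.
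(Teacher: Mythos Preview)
Your proof is correct and follows essentially the same route as the paper: test the embedding against the family $\{\sA_B : B\in\binom{[2m]}{m+1}\}$ and against all pairs, translate the cut decomposition of $\delta_1$ into $\sum_\sU\mu_\sU|\partial\sU|$ and $\sum_\sU\mu_\sU|\sU||\overline{\sU}|$, and feed in Lemma~\ref{Hypercut}. The only cosmetic differences are your distortion convention ($\delta_1\le\dh\le k_1\delta_1$ rather than the paper's $\dh\le\delta_1\le k_1\dh$) and the fact that you expand $\delta_1$ explicitly into cut diversities whereas the paper quotes Proposition~16 of \cite{BryantTupper2014} to pass to a single extremal cut; your handling of the normalization in Lemma~\ref{Hypercut} via $\partial\sU=\partial\overline{\sU}$ and $\min(|\sU|,|\overline{\sU}|)\ge|\sU||\overline{\sU}|/n$ is in fact slightly more careful than the paper's.
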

\begin{proof}
Define vectors $\bc$ and $\bd$, indexed by $\pow(\sX)$, by
\begin{align*}
\bc_{\sA} &= \begin{cases} 1 & \sA = \binom{B}{m} \mbox{ for some $B \in \binom{[2m]}{m+1}$} \\
                   0 & \mbox{otherwise.} \end{cases}\\
\bd_{\sA} &= \begin{cases} 1 & |\sA| = 2 \\ 0 & \mbox{otherwise.} \end{cases} \\
\end{align*}
Then

\begin{align*}
\sum_{\sA \subseteq \sX} \bc_{\sA} \dh(\sA) & = \sum_{B \in \binom{[2m]}{m+1}} \dh \left(\binom{B}{m} \right) \\
& = \binom{2m}{m+1},
\end{align*}
since $\delta_H( \binom{B}{m}) = 1$ for all $B$.

Also
\begin{align*}
\sum_{\sA \subseteq \sX} \bd_{\sA} \dh(\sA) & = \frac{1}{2} \sum_{A,B \in \sX} \delta_H(\{ A, B\}) \\
& = \frac{1}{2} \sum_{A, B \in \sX} |\{ i \in [2m] \colon i \in A \setminus B\}| \\
& = \frac{1}{2} \sum_{i=1}^{2m} |\{(A,B) \in \sX \times \sX: i \in A \setminus B \} | \\
& = \frac{1}{2}(2m) \binom{2m-1}{m-1} \binom{2m-1}{m}\\
& = m \binom{2m-1}{m}^2.
\end{align*}
Let $\delta_1$ be any $L_1$-embeddable diversity such that for all $\sA \subseteq \sX$,
\[\dh(\sA) \leq \delta_1(\sA) \leq k_1 \dh(\sA).\]
We then have
\begin{align*}
\sum_{\sA \subseteq \sX} \bc_{\sA} \delta_1(\sA) & \geq  \sum_{\sA \subseteq \sX} \bc_{\sA} \dh(\sA) \\
\sum_{\sA \subseteq \sX} \bd_{\sA} \delta_1(\sA) & \leq k_1 \sum_{\sA \subseteq \sX} \bd_{\sA} \dh(\sA) \\.
\end{align*}

which implies:
\[k_1\frac{\sum_{\sA \subseteq \sX} \bc_{\sA} \dh(\sA)}{\sum_{\sA \subseteq \sX} \bd_{\sA} \dh(\sA)}\ge \frac{\sum_{\sA \subseteq \sX} \bc_{\sA} \delta_1(\sA)}{\sum_{\sA \subseteq \sX} \bd_{\sA} \delta_1(\sA)}\]

From the proof of Proposition 16 in \cite{BryantTupper2014} there is a cut metric  $\delta_{\sU|\overline{\sU}}$ such that 
\begin{align*}
\frac{\sum_{\sA \subseteq \sX} \bc_{\sA} \delta_{\sU|\overline{\sU}}(\sA)}{\sum_{\sA \subseteq \sX} \bd_{\sA} \delta_{\sU|\overline{\sU}}(\sA)} &\leq \frac{\sum_{\sA \subseteq \sX} \bc_{\sA} \delta_1(\sA)}{\sum_{\sA \subseteq \sX} \bd_{\sA} \delta_1(\sA)}.
\end{align*}
Now note that
\begin{align*}
\frac{\sum_{\sA \subseteq \sX} \bc_{\sA} \delta_{\sU|\overline{\sU}}(\sA)}{\sum_{\sA \subseteq \sX} \bd_{\sA} \delta_{\sU|\overline{\sU}}(\sA)} & = \frac{|\partial \sU|}{|\sU| |\overline{\sU}|}\\  
& \geq \frac{m}{m+1}(1+5\sqrt{2m})^{-1}|\overline{\sU}|^{-1}\\
& \geq \frac{m}{m+1}(1+5\sqrt{2m})^{-1}\binom{2m}{m}^{-1}\\
\end{align*}

On the other hand:
\begin{align*}
\frac{\sum_{\sA \subseteq \sX} \bc_{\sA} \dh(\sA)}{\sum_{\sA \subseteq \sX} \bd_{\sA} \dh(\sA)}&=\frac{\binom{2m}{m+1}}{m\binom{2m-1}{m}^2}\\
&=\frac{2}{(m+1)\binom{2m-1}{m}}
\end{align*}

Bringing everything together:
\begin{align*}
k_1&\ge\left(\frac{m}{m+1}(1+5\sqrt{2m})^{-1}\binom{2m}{m}^{-1}\right)/\left(\frac{2}{(m+1)\binom{2m-1}{m}}\right)\\
&=\frac{m}{4+20\sqrt{2m}}.
\end{align*}
\end{proof}

\begin{cor} \label{cor:lowerbound}
For every $m$ there is a negative type diversity with $n = \binom{2m}{m}$ points which cannot be embedded in $L_1$ with less than $\Omega(\sqrt{m}) = \Omega(\sqrt{\log n})$ distortion.	
\end{cor}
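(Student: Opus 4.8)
The plan is to simply instantiate Theorem~\ref{thm:L1bound} and translate its bound from the parameter $m$ to the number of points $n$. Fix $m>2$ and take $(\sX,\dh)$, the diversity on $\sX=\binom{[2m]}{m}$ defined above; it has exactly $n=\binom{2m}{m}$ points, and we have already observed (via the diversity isomorphism sending each $m$-subset to its $0/1$ indicator vector, together with Theorem~\ref{thm:embedRk}) that $(\sX,\dh)$ is of negative type. Thus the only thing left to verify is that the explicit lower bound $k_1(\dh)\ge \frac{m}{4+20\sqrt{2m}}$ supplied by Theorem~\ref{thm:L1bound} is both $\Omega(\sqrt m)$ and $\Omega(\sqrt{\log n})$.

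For the first of these, note that $\frac{m}{4+20\sqrt{2m}}\sim \frac{\sqrt m}{20\sqrt 2}$ as $m\to\infty$, so this quantity is indeed $\Omega(\sqrt m)$. For the second, I would invoke Stirling's approximation to pin down $\log n$: since $\binom{2m}{m}=\frac{4^m}{\sqrt{\pi m}}\bigl(1+o(1)\bigr)$, we get $\log n=\log\binom{2m}{m}=2m\log 2-\tfrac12\log(\pi m)+o(1)=\Theta(m)$, hence $\sqrt m=\Theta(\sqrt{\log n})$. Combining, any $L_1$-embeddable diversity $\delta_1$ with $\dh(\sA)\le\delta_1(\sA)\le\alpha\,\dh(\sA)$ for all $\sA\subseteq\sX$ must have $\alpha\ge k_1(\dh)=\Omega(\sqrt m)=\Omega(\sqrt{\log n})$, which is exactly the statement of the corollary.

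There is essentially no obstacle here: all the real work is already done in Lemma~\ref{Hypercut} (the Johnson-graph isoperimetric estimate, via \cite{Christofides2013}) and Theorem~\ref{thm:L1bound} (the sparsest-cut/diversity-embedding argument in the style of \cite{BryantTupper2014}). The only points requiring a moment's care are that the constant in the bound of Theorem~\ref{thm:L1bound} is strictly positive so the bound is non-vacuous, and that $\log\binom{2m}{m}$ grows like $m$ rather than, say, $m\log m$; both are immediate from Stirling. I would therefore write the proof of the corollary in just a few lines, citing the proposition that $(\sX,\dh)$ is of negative type and Theorem~\ref{thm:L1bound}, and recording the asymptotic identity $\log\binom{2m}{m}=\Theta(m)$.
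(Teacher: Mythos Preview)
Your proposal is correct and matches the paper's approach: the paper states the corollary with no separate proof, treating it as immediate from Theorem~\ref{thm:L1bound} together with the earlier proposition that $(\sX,\dh)$ is of negative type. Your only addition is to spell out the asymptotics $\frac{m}{4+20\sqrt{2m}}=\Omega(\sqrt m)$ and $\log\binom{2m}{m}=\Theta(m)$, which the paper leaves implicit.
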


\section{Open problems}

Corollary~\ref{cor:lowerbound} gives a lower bound for the largest distortion required to embed a negative diversity into $L_1$. The current best upper bound for embedding a negative type metric into $L_1$ is $O(\sqrt{\log n}\log\log n)$, as established by  \cite{arora2008euclidean}. The ideas used in the embedding do not appear to generalise to diversities in a straight-forward way, and determining the best upper bound for embedding a negative diversity into $L_1$ is an open problem.

A related question is to determine the distortion bounds for embedding an arbitrary finite diversity into a negative type diversity. Since the induced metric of any negative diversity is itself $L_1$ embeddable, there will be diversities which cannot be embedded into a negative diversity with distortion less the $O(\log n)$. It is an open problem as to what the matching upper bound might be.

In a slightly different direction, we note that the definition of negative type diversities could be extended to a class of diversities with $p$-negative type: those which satisfy 
\[\sum_{A} \sum_{B} \bx_A \bx_B \delta(A \cup B)^p \leq 0\]
for all zero sum vectors $\bx \in \Re^{\pow(X)}$ with $\bx_\emptyset = 0$. It would be of interest to see whether results on metrics with $p$-negative type have analogues for diversities \cite{Li10,Sanchez15,Wolf12}.

\bibliographystyle{apa}
\bibliography{NegativeDiversities}

\end{document}